\numberwithin{equation}{section}
\theoremstyle{plain}
     \newtheorem{lemma}{Lemma}[section]
     \newtheorem{proposition}[lemma]{Proposition}
     \newtheorem{theorem}[lemma]{Theorem}
     \newtheorem{corollary}[lemma]{Corollary}
\theoremstyle{definition}
     \newtheorem{remark}[lemma]{Remark}
\theoremstyle{remark}
\renewenvironment{theorem}{\begin{Theorem}}{\end{Theorem}}
\renewenvironment{proof}{\begin{Proof}[\bfseries\proofname]}{\end{Proof}}
\newenvironment{prf}[1]{\begin{Proof}[\textbf{Proof of #1}]}{\end{Proof}}
\newcommand{\bbC}{\mathbb{C}}
\newcommand{\D}{\mathbb{D}}
\newcommand{\Cinf}{\overline{\mathbb{C}}}
\newcommand{\R}{\mathbb{R}}
\newcommand{\T}{\mathbb{T}}
\newcommand{\Hol}{\text{Hol}}
\title{Szeg\H o's theorem for Jordan arcs}
\author{Benedikt Buchecker\footnote{Department of Mathematics, KU Leuven, 3000 Leuven, Belgium, E-Mail: benedikt.buchecker@student.kuleuven.be}}
\date{September 15, 2025}
\begin{document}

\maketitle
\begin{abstract}
\noindent
The $n$-th Christoffel function for a point $z_0\in\mathbb C$ and a finite measure $\mu$ supported on a Jordan arc $\Gamma$ is 
\[
\lambda_n(\mu,z_0)=\inf\left\{\int_\Gamma |P|^2d\mu\mid P\text{ is a polynomial of degree at most }n\text{ and } P(z_0)=1\right\}.
\]
It is natural to extend this notion to $z_0=\infty$ and define $\lambda_n(\mu,\infty)$ to be the infimum of the squared $L^2(\mu)$-norm over monic polynomials of degree $n$. The classical Szeg\H o theorem provides an asymptotic description of $\lambda_n(\mu,z_0)$ for $|z_0|>1$ and $z_0=\infty$ and arbitrary finite measures supported on the unit circle. Widom has proved a version of Szeg\H o's theorem for measures supported on $C^{2+}$-Jordan arcs for the point $z_0=\infty$ and purely absolutely continuous measures belonging to the Szeg\H o class. We extend this result in two directions. We prove explicit asymptotics of $\lambda_n(\mu,z_0)$ for any finite measure $\mu$ supported on a $C^{1+}$-Jordan arc $\Gamma$, and for all points $z_0\in\mathbb{C}\cup\{\infty\}\setminus\Gamma$. Moreover, if the measure is in the Szeg\H o class, we provide explicit asymptotics for the extremal and orthogonal polynomials.
\end{abstract}

\section{Introduction}
We want to study asymptotics of orthogonal polynomials and other $L^2$-extremal polynomials as the degree tends to infinity. For fixed $n$, the extremal value is given by the value of the Christoffel function. For a positive and finite measure $\mu$ with compact and infinite support $K$ in $\bbC$ and $z_0\in \bbC$ the $n$-th \emph{Christoffel function} is given by
\begin{align}\label{intro1}
    \lambda_n(\mu,z_0)=\inf\left\{\int_K |P|^2d\mu\mid P\text{ is a polynomial of degree at most }n\text{ and } P(z_0)=1\right\}.
\end{align}
This can be extended to $z_0=\infty$ by setting
\begin{align}\label{intro2}
    \lambda_n(\mu,\infty) := \inf\left\{\int_K |P|^2 d\mu: P \text{ is a monic polynomial of degree } n\right\}.
\end{align}
Both minimization problems \eqref{intro1} and \eqref{intro2} admit a unique minimizer $P_{n,\mu,z_0}$ which we call the $n$-th \emph{minimizing polynomial} of the measure $\mu$. For $z_0=\infty$, the minimizing polynomial $P_{n,\mu,\infty}$ is the monic orthogonal polynomial associated with the measure $\mu$. Historically, the problems for $z_0=\infty$ and $z_0\in \bbC$ were studied independently. However, in recent works \cite{Eic17,ELY24,BEZ25} a unified approach proved successful. In \cite{BEZ25} it was shown that after appropriate rescaling the Christoffel function is continuous for all $z_0\in \Cinf$, where $\Cinf$ denotes the Riemann sphere $\bbC\cup \{\infty\}$. To highlight this unified approach, we denote the norm of the monic orthogonal polynomial associated with $\mu$ as $\lambda_n(\mu,\infty)$, differing from common notation.

Asymptotic results of the Christoffel function and minimizing polynomials date back to Bernstein, de la Valle Poussin, and Szeg\H o. Applications of these results, especially for orthogonal polynomials, can be found in very different settings such as random matrix theory \cite{Dei00}, spectral theory \cite{OPUC}, and data analysis \cite{LPP22}. 

The classical Szeg\H o theorem on the outside of the unit disc $\D$ states
\begin{align}\label{intro3}
    \lim_{n\to \infty} |z_0|^{-2n}\lambda_n(\mu,z_0)= (1-|z_0|^{-2}) \exp\left(\int_{\partial \D} \frac{|z_0|^{2}-1}{|\zeta-z_0|^2} \log f(\zeta) \,dm(\zeta)\right),
\end{align}
 where $|z_0|>1$, $\mu = f\,dm +\mu_s$ is a positive and finite measure supported on the unit circle $\partial \D$ with Lebesgue decomposition with respect to the normalized Lebesgue measure $dm$ on $\partial \D$. We note that the measures may have a singular part that does not appear on the right-hand side. Similar observations were made in \cite{BEZ25,CLW24,Spanish} where the singular part does not influence the asymptotics of the Christoffel function. Even so, the fact that the asymptotics can be computed for measures with a singular part proved important in \cite{BEZ25} for the study of $L^\infty$-extremal polynomials on a closed smooth Jordan curve. In this work, we investigate Jordan arcs, i.e., non-closed curves. 

For a Jordan arc, the difference to a closed Jordan curve is that there is no inner domain, and the boundary can be approached from two sides within the exterior domain. This makes any analysis more difficult, which is reflected in the fact that many problems that are solved for Jordan curves remain unsolved for Jordan arcs. One prominent example of this is the Chebyshev polynomials, which correspond to $L^\infty$-minimizers at the point $z_0=\infty$. While asymptotics of Chebyshev polynomials are known for a wide range of subsets of the real line \cite{AZ24x,CR24x,CSZ17,widom} and finite unions of Jordan curves \cite{widom}, for Jordan arcs the problem remains open except for circular arcs \cite{TD91,Eic17}, i.e., subarcs of the unit circle. A similar discrepancy is present in the $L^2$-case. In his seminal work \cite{widom}, Widom derived asymptotics of orthogonal polynomials, i.e., at the extremal point $z_0=\infty$, for purely absolutely continuous measures supported on the finite disjoint union of $C^{2+}$-Jordan arcs and curves. Asymptotics for arbitrary measures (that may have a singular part) are only known in two special cases for $z_0=\infty$: intervals \cite{NS91} or circular arcs \cite{Spanish}. We are able to settle the $L^2$-case by proving a complete analogue of \eqref{intro3} for a smooth Jordan arc. That is, we compute asymptotics of the Christoffel function and extremal polynomials for arbitrary measures supported on a $C^{1+}$-Jordan arc and any $z_0$ outside the Jordan arc.

Let us introduce the necessary quantities. Let $\Gamma$ be a $C^{1+\alpha}$-Jordan arc. That is, $\Gamma$ is a homeomorphic image of the unit interval with a parametrization that is continuously differentiable and the derivative satisfies a Lipschitz condition with exponent $\alpha>0$. If we do not want to specify the exponent, we will write $C^{1+}$. Let $\Omega = \Cinf\backslash \Gamma$ and $\Phi_{z_0}$ the conformal map from $\Omega$ to $\Cinf\setminus\overline{\D}$ such that $\Phi_{z_0}(\infty)=\infty$ and $\Phi_{z_0}(z_0)>0$ if $z_0\neq\infty$ and $\Phi_{z_0}'(z_0)>0$ if $z_0=\infty$. For $z_0\in\Omega$, we define 
\begin{align}\label{intro4}
    C(\Gamma,z_0)=\begin{cases}
        1/\Phi_{z_0}(z_0) &,z_0\ne \infty,\\
        1/\Phi_{\infty}'(\infty) &,z_0 =\infty.
    \end{cases}
\end{align}
Then we consider the $L^2$-\emph{Widom factor}
\begin{align}\label{intro5}
    W_{2,n}(\mu,z_0) := C(\Gamma,z_0)^{-n}\lambda_n(\mu,z_0)^{1/2}.
\end{align}
The $L^2$-Widom factors are usually considered only for the point $z_0=\infty$ where $C(\Gamma,\infty)$ is the classical logarithmic capacity of $\Gamma$. The above definition for other points $z_0$ is a continuous extension to $\Omega$, see \cite{BEZ25}. 

Furthermore, we need some basic notions of potential theory, which can be found in \cite{AG01,GM08,Hel14,Lan72,Ran95,ST97}. Let $\omega_{z_0}$ be the harmonic measure of the domain $\Omega$ and point $z_0\in \Omega$, and $\rho_{z_0}$ the density of $\omega_{z_0}$ with respect to the arc-length measure $|dz|$ on $\Gamma$.
\begin{theorem}\label{thm1}
    Let $\Gamma \in C^{1+}$ be a Jordan arc, $\mu= {f_{z_0}} d\omega_{z_0} + \mu_s$ a positive and finite measure with singular part $\mu_s$ with respect to the harmonic measure $\omega_{z_0}$ and $z_0\in \Cinf\backslash \Gamma$. Then,
    \begin{align}\label{eq:widom_limit}
        &\lim_{n\to\infty}W_{2,n}(\mu,z_0)^2 =(1-\Phi_{z_0}(z_0)^{-2}) \frac{2\pi}{|\Phi_{z_0}'(z_0)|}\exp\left(\int_\Gamma \log (f_{z_0}\rho_{z_0})d\omega_{z_0}\right).
    \end{align}    
\end{theorem}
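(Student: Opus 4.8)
The plan is to prove the two one-sided bounds $\limsup_{n\to\infty} W_{2,n}(\mu,z_0)^2\le R$ and $\liminf_{n\to\infty} W_{2,n}(\mu,z_0)^2\ge R$, where $R$ is the right-hand side of \eqref{eq:widom_limit}, and to organise both around a conformal transplantation to the exterior of the unit disc. For the lower bound, $\lambda_n(\mu,z_0)\ge\lambda_n(f_{z_0}\,d\omega_{z_0},z_0)$ lets me discard $\mu_s$ and assume $\mu=f_{z_0}\,d\omega_{z_0}$ is in the Szeg\H o class, the contrary case giving $R=0$. For the upper bound, monotonicity in the measure (comparing $\mu$ with $\max(f_{z_0},\varepsilon)\,d\omega_{z_0}+\mu_s$ and letting $\varepsilon\downarrow0$) reduces to $f_{z_0}$ bounded away from $0$; a second comparison with $\max(f_{z_0},1/M)\,d\omega_{z_0}+\mu_s$ and $M\uparrow\infty$ then disposes of the case $\int_\Gamma\log f_{z_0}\,d\omega_{z_0}=-\infty$; and in the remaining case the contribution of $\mu_s$ is killed by the standard device of multiplying the test polynomials by low-degree polynomials small on a compact set carrying almost all of $\mu_s$. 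Thus everything reduces to the asymptotics of $\lambda_n(f_{z_0}\,d\omega_{z_0},z_0)$ for $f_{z_0}$ bounded away from $0$ and, after mollification, continuous.

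Since $\Gamma$ is an arc, $\Omega$ is simply connected and $\Psi_{z_0}:=\Phi_{z_0}^{-1}$ maps $\Cinf\setminus\overline{\D}$ conformally onto $\Omega$. By Kellogg--Warschawski theory, $\Gamma\in C^{1+}$ is exactly what is needed for $\Psi_{z_0}$ to extend continuously to $\partial\D$, mapping it two-to-one onto $\Gamma$, and to be $C^1$ with non-vanishing derivative there except at the two points lying over the endpoints of $\Gamma$, near which it opens up like a square root. The two preimages of a point of $\Gamma$ are exchanged by a M\"obius involution $\gamma$ that restricts to an orientation-reversing reflection of $\partial\D$. Transplanting, $P\mapsto P\circ\Psi_{z_0}$ identifies the polynomials of degree $\le n$ with the $\gamma$-invariant functions analytic on $\Cinf\setminus\overline{\D}$ having a pole of order $\le n$ at $\infty$, and $\int_\Gamma|P|^2\,d\mu=\int_{\partial\D}|P\circ\Psi_{z_0}|^2\,d\tilde\mu$ with $\tilde\mu$ the $\gamma$-symmetrised pull-back of $\mu$; as $|P\circ\Psi_{z_0}|^2$ and $f_{z_0}\circ\Psi_{z_0}$ are $\gamma$-invariant one may replace $\tilde\mu$ by $(f_{z_0}\circ\Psi_{z_0})\,d\omega^{*}_{\Phi_{z_0}(z_0)}$, where $\omega^{*}$ is harmonic measure of $\Cinf\setminus\overline{\D}$. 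Conformal naturality of harmonic measure produces the geometric factor $\tfrac{2\pi}{|\Phi_{z_0}'(z_0)|}\exp\!\big(\int_\Gamma\log\rho_{z_0}\,d\omega_{z_0}\big)$, which records the change of variables between arc length on $\Gamma$ and on $\partial\D$ together with the two-to-one folding. Finally introduce the Szeg\H o function $\mathcal D$: analytic and non-vanishing on $\Omega$, with $\log|\mathcal D|$ the harmonic extension of $\tfrac12\log f_{z_0}$ and $\mathcal D(\infty)>0$; then $|\mathcal D|^2=f_{z_0}$ on $\Gamma$ from both sides and $|\mathcal D(z_0)|^2=\exp\!\big(\int_\Gamma\log f_{z_0}\,d\omega_{z_0}\big)$. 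Dividing by $\mathcal D$ turns the weighted problem into a constant-weight extremal problem on $\partial\D$ over the relevant $\gamma$-respecting subspace, and $R$ should come out as $|\mathcal D(z_0)|^2$ times the limiting value of that constant-weight problem.

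For the upper bound I use explicit test polynomials: the Faber-polynomial part, relative to $\Phi_{z_0}$, of the ideal function $\mathrm{const}\cdot\Phi_{z_0}^{n}/\mathcal D$, normalised at $z_0$ (monic when $z_0=\infty$); one checks that the Faber tails, the endpoint cusps, and, in the general case, $\mu_s$ each contribute $o\!\big(C(\Gamma,z_0)^{2n}\big)$, convergence of these Faber expansions being where $C^{1+}$ enters. For the lower bound one applies, to an arbitrary admissible $P$, the estimate $\int_\Gamma|P|^2\,d\mu\ge\int_\Gamma|P\mathcal D|^2\,d\omega_{z_0}$, then Jensen's inequality and subharmonicity of $\log|\,\cdot\,|$ on $\Omega$, evaluating the harmonic majorant at $z_0$ and at $\infty$ and reading off the pole of order $\le n$; here one must use that $|P\mathcal D|^2=|P|^2 f_{z_0}$ is single-valued on $\Gamma$, since without this symmetry the bound loses precisely the factor $\tfrac{2\pi}{|\Phi_{z_0}'(z_0)|}\exp\!\big(\int_\Gamma\log\rho_{z_0}\,d\omega_{z_0}\big)$ against the model value --- already visible in the monic Chebyshev polynomials on $[-1,1]$. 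The two bounds meet at the limiting value of the constant-weight model problem, which is computed by Parseval's identity on $\partial\D$ (generalising the $[-1,1]$ computation) to equal $(1-\Phi_{z_0}(z_0)^{-2})\tfrac{2\pi}{|\Phi_{z_0}'(z_0)|}\exp\!\big(\int_\Gamma\log\rho_{z_0}\,d\omega_{z_0}\big)$, the factor $1-\Phi_{z_0}(z_0)^{-2}$ reflecting the geometric progression implicit in an evaluation constraint at a finite point (and degenerating to $1$ at $z_0=\infty$). Combining gives \eqref{eq:widom_limit}.

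The main obstacle is the two-sidedness of $\Gamma$: the arc Szeg\H o function $\mathcal D$ is \emph{not} compatible with $\gamma$. Because harmonic conjugation is odd under the orientation-reversing reflection $\gamma$, the boundary values of $\mathcal D\circ\Psi_{z_0}$ from the two sides of $\partial\D$ differ by a non-constant unimodular factor, so after dividing by $\mathcal D$ the $\gamma$-invariance of $P$ becomes only a twisted symmetry of the remaining factor. Making the substitution and the two estimates run cleanly --- most transparently by passing to the double of $\Omega$, on which $\mathcal D$ becomes single-valued and the model problem becomes a genuinely symmetric one with a controllable index correction --- and then carrying every geometric constant through the transplantation accurately enough to land exactly on \eqref{eq:widom_limit}, is where the real work lies; the remaining ingredients (endpoint cusps, the reductions of the first paragraph, negligibility of $\mu_s$) are technical but follow established patterns.
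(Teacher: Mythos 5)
Your overall architecture (generalized Faber test polynomials for the upper bound, a duality argument for the lower bound, mollification of the density, and removal of $\mu_s$ by auxiliary polynomials \`a la Simon) matches the paper's, and you correctly identify the two\-/sidedness of the arc as the central difficulty. But at exactly that point your proposal has a genuine gap, on both sides of the estimate. For the upper bound, the ideal function $\mathrm{const}\cdot\Phi_{z_0}^{n}/\mathcal D$ with $|\mathcal D|^{2}=f_{z_0}$ on $\Gamma$ is not sharp: expanding $\int_\Gamma\bigl|(\Phi_{z_0}^{+})^{n}/\mathcal D^{+}+(\Phi_{z_0}^{-})^{n}/\mathcal D^{-}\bigr|^{2}f_{z_0}\,d\omega_{z_0}$, the diagonal terms each integrate to $1$ and the cross term oscillates away, so after normalizing at $z_0$ you obtain only $\limsup W_{2,n}^{2}\le 2\exp\bigl(\int_\Gamma\log f_{z_0}\,d\omega_{z_0}\bigr)=2R_{f_{z_0}}(z_0)$. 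The claimed limit is $R_{f_{z_0}}(z_0)/K_{\omega_{z_0}}(z_0,z_0)$ with $\tfrac12\le K_{\omega_{z_0}}(z_0,z_0)\le 1$, and $K_{\omega_{z_0}}(z_0,z_0)=\tfrac12$ only in the symmetric case $|(B_{z_0}^{+})'|\equiv|(B_{z_0}^{-})'|$ (e.g.\ $[-1,1]$ with $z_0=\infty$), so for a general arc your bound overshoots. The correct ideal function is $\Phi_{z_0}^{n}F_{\mu,z_0}$ with $F_{\mu,z_0}=\sqrt{R_{f_{z_0}}(z_0)R(z_0)}\big/\sqrt{R_{f_{z_0}}R}$, where $R$ is the additional outer function with boundary values $\rho_{z_0}/|(B_{z_0}^{\pm})'|$; this factor, which records the asymmetry of the two sides of $\Gamma$ as seen from $z_0$, is absent from your construction. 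Symmetrically, your lower bound via Jensen's inequality and subharmonicity of $\log\bigl|P\mathcal D\,\Phi_{z_0}^{-n}\bigr|$ yields only $\liminf W_{2,n}^{2}\ge R_{f_{z_0}}(z_0)$; it cannot see the factor $1/K_{\omega_{z_0}}(z_0,z_0)$ because it never distinguishes the two boundary values. Your two bounds therefore sandwich the limit between $R_{f_{z_0}}(z_0)$ and $2R_{f_{z_0}}(z_0)$ but do not meet.

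The missing ingredient is the reproducing kernel of the two\-/sided Hardy space $H^{2}(\Omega,\mu)$ with norm $\oint_\Gamma|F|^{2}f_{z_0}\,d\omega_{z_0}=\int_\Gamma(|F_{+}|^{2}+|F_{-}|^{2})f_{z_0}\,d\omega_{z_0}$. The sharp lower bound comes from the identity $p(z_0)\Phi_{z_0}(z_0)^{-n}=\int_\Gamma p\,\overline{L_n}\,f_{z_0}\,d\omega_{z_0}$ with $L_n=(\Phi_{z_0}^{+})^{n}K_\mu^{+}+(\Phi_{z_0}^{-})^{n}K_\mu^{-}$, together with $\int_\Gamma|L_n|^{2}f_{z_0}\,d\omega_{z_0}\to K_\mu(z_0,z_0)$; both this and the sharp evaluation of the test polynomials rest on a Riemann--Lebesgue statement for the oscillatory cross term $\int_\Gamma(\Phi_{z_0}^{+}/\Phi_{z_0}^{-})^{n}(\cdots)\,d\omega_{z_0}$ (Widom's Lemma 12.1). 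Your appeal to ``Parseval on $\partial\D$'' for the constant\-/weight model problem cannot substitute for this: the constant $(1-\Phi_{z_0}(z_0)^{-2})\tfrac{2\pi}{|\Phi_{z_0}'(z_0)|}\exp\bigl(\int_\Gamma\log\rho_{z_0}\,d\omega_{z_0}\bigr)=1/K_{\omega_{z_0}}(z_0,z_0)$ depends on the arc through $R_{\rho_{z_0}}(z_0)$ and is the value of the extremal problem over the $\gamma$\-/invariant subspace only, not of the unrestricted circle problem. You do flag the ``twisted symmetry'' and the passage to the double as ``where the real work lies,'' but that work is precisely the content of the theorem and is not carried out.
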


We will also prove asymptotics of the minimizing polynomials. To state the result, we need to introduce the Szeg\H o function and Hardy space corresponding to a measure $\mu$. We say that a positive and finite measure $\mu = {f_{z_0}} d\omega_{z_0} + \mu_s$ where $\mu_s$ is singular with respect to $\omega_{z_0}$ satisfies the \emph{Szeg\H{o} condition} if
\begin{align}\label{szego_cond}
    \int_\Gamma \log {f_{z_0}} d\omega_{z_0} >-\infty.
\end{align}
Note that this is independent of $z_0$ and equivalent to 
\begin{align}\label{szego_alt}
    \int_\Gamma \log ({f_{z_0}} \rho_{z_0}) |dz| > -\infty,
\end{align}
where $\rho_{z_0}$ is the density of $\omega_{z_0}$ with respect to the arc-length measure $|dz|$ on $\Gamma$. The condition \eqref{szego_alt} is the Szeg\H o condition as introduced by Widom \cite{widom}. As will become clear from the formulation of Corollary \ref{cor2}, the harmonic measure in \eqref{szego_cond} is more natural in our setting; see also \cite{Alp22}. If $\mu$ satisfies the Szeg\H o condition \eqref{szego_cond} we define the \emph{Szeg\H o function} of the density $f_{z_0}$
\begin{align}\label{szego_func}
    R_{{f_{z_0}}}(z) =  \exp\left(\int \log {f_{z_0}}\,d\omega_z + i* \int \log {f_{z_0}}\,d\omega_z\right)
\end{align}
where $i* \int \log {f_{z_0}}\,d\omega_z$ is the harmonic conjugate of the harmonic function with boundary values $\log {f_{z_0}}$ such that $R_{f_{z_0}}(z_0)>0$. Then $R_{f_{z_0}}$ is analytic and non-zero in $\Omega$ with $|R_{f_{z_0}}(z)|={f_{z_0}}(z)$ on the boundary $\omega_{z_0}$ a.e.. If $\mu$ does not satisfy the Szeg\H o condition, then we set $R_{f_{z_0}}=0$. 

Let $H^2(\D)$ be the standard Hardy space on the unit disc. Then we define
\begin{align*}
    H^2(\Omega) := \left\{ F \in \Hol(\Omega): z\mapsto F(\Phi_{z_0}^{-1}(1/z)) \in H^2(\D)\right\},
\end{align*}
where $\Hol(\Omega)$ is the set of all holomorphic functions $F:\Omega\to \bbC$.
If $\mu$ satisfies the Szeg\H{o} condition then the corresponding Hardy space is defined by
\begin{align*}
    H^2(\Omega,\mu) := \left\{ F \in \Hol(\Omega): F \sqrt{R_{f_{z_0}}} \in H^2(\Omega)\right\}.
\end{align*}
As we will see in Section 2, every $F\in H^2(\Omega,\mu)$ has boundary values $F_+,F_-$ at $|dz|$-a.e. point on both sides of $\Gamma$ with $F_+,F_- \in L^2({f_{z_0}}d\omega_{z_0})$, and 
\begin{align*}
    \|F\|_{H^2(\Omega,\mu)} := \left(\oint_\Gamma |F|^2 {f_{z_0}}d\omega_{z_0} \right)^{1/2}= \left(\int_\Gamma (|F_+|^2 + |F_-|^2) {f_{z_0}}d\omega_{z_0} \right)^{1/2}
\end{align*}
defines a norm on $H^2(\Omega,\mu)$. It is even a reproducing kernel Hilbert space (see also \cite{widom}). Thus, we can consider the following quantity
\begin{align}\label{intro8}
    \nu(\mu,z_0) = \inf\left\{ \oint_\Gamma |F|^2 {f_{z_0}}d\omega_{z_0} : F\in H^2(\Omega,\mu),F(z_0)=1\right\}.
\end{align}
If $K_\mu$ is the reproducing kernel of $H^2(\Omega,\mu)$, the Fischer-Riesz theorem implies that $\nu(\mu,z_0)= \frac{1}{K_\mu(z_0,z_0)}$ and that $F_{\mu,z_0}= \frac{K_\mu(\cdot,z_0)}{K_\mu(z_0,z_0)}$ is the unique minimizer of \eqref{intro8}. Now, we can state the main result on the asymptotics of the minimizing polynomials.
\begin{theorem}\label{thm3}
    Let $\Gamma \in C^{1+}$ be a Jordan arc, $\mu={f_{z_0}}d\omega_{z_0} + \mu_s$ a positive and finite measure with singular part $\mu_s$ and $z_0\in \Cinf\backslash \Gamma$. If $\mu$ satisfies the Szeg\H{o} condition and $P_{n,\mu,z_0}$ is the $n$-th minimizing (monic orthogonal) polynomial for $\mu$ then
    \begin{align*}
        & |C(\Gamma,z_0)|^{-2n}\lambda_n(\mu,z_0)\xrightarrow{n\to\infty} \nu(\mu,z_0),\\
        &\int_\Gamma |C(\Gamma,z_0)^{-n}P_{n,\mu,z_0} - H_n|^2 {f_{z_0}}d\omega_{z_0}\xrightarrow{n\to\infty} 0,\\
        &\frac{C(\Gamma,z_0)^{-n}}{\Phi_{z_0}(z)^n}P_{n,\mu,z_0}(z) \xrightarrow{n\to\infty} F_{\mu,z_0}(z)
    \end{align*}
     \text{locally uniform in $\Omega$}, where
    \begin{align*}
        H_n(z)&= \Phi_{z_0}^+(z)^n F_{\mu,z_0}^+(z) + \Phi_{z_0}^-(z)^n F_{\mu,z_0}^-(z).
    \end{align*}
\end{theorem}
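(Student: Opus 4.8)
Write $C=C(\Gamma,z_0)>0$, $\Phi=\Phi_{z_0}$, $\omega=\omega_{z_0}$, $f=f_{z_0}$, and set
\[
p_n:=C^{-n}\,\Phi^{-n}\,P_{n,\mu,z_0}\in\Hol(\Omega).
\]
Since $C^n\Phi^n$ is ``monic'' at $z_0$ ($C^n\Phi(z)^n\sim z^n$ near $\infty$ if $z_0=\infty$, and $C^n\Phi(z_0)^n=1$ if $z_0\ne\infty$) and $|\Phi|\ge1$ on $\Omega$, each $p_n$ is bounded on $\Omega$ with $p_n(z_0)=1$, hence lies in $H^\infty(\Omega)\subset H^2(\Omega,\mu)$; and because $|\Phi^\pm|=1$ on $\Gamma$ and $P_{n,\mu,z_0}$ is single valued, $C^{-n}P_{n,\mu,z_0}=p_n^\pm\Phi^{\pm n}$ on $\Gamma$ and $\|p_n\|_{H^2(\Omega,\mu)}^2=2\,C^{-2n}\int_\Gamma|P_{n,\mu,z_0}|^2 f\,d\omega$. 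For the first assertion the plan is to compute $K_\mu(z_0,z_0)$ by transporting the Szeg\H o kernel of $H^2(\D)$ through the isometries $H^2(\D)\to H^2(\Omega)\to H^2(\Omega,\mu)$ coming from $\Phi$ and from division by $\sqrt{R_f}$; the resulting factors (the disc Szeg\H o kernel at $1/\Phi(z_0)$, the conformal/harmonic-measure Jacobian, and $R_f(z_0)$ together with $\exp\int\log\rho_{z_0}\,d\omega$) reassemble into the right-hand side of \eqref{eq:widom_limit}, so that $\nu(\mu,z_0)=1/K_\mu(z_0,z_0)$ equals that quantity and Theorem~\ref{thm1} yields $C^{-2n}\lambda_n(\mu,z_0)\to\nu(\mu,z_0)$. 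Applying Theorem~\ref{thm1} also to the (still Szeg\H o) measure $f\,d\omega$, whose right-hand side is the same and hence again $\nu(\mu,z_0)$, and noting that $P_{n,\mu,z_0}$ is admissible for that extremal problem, one gets $\liminf_n C^{-2n}\int_\Gamma|P_{n,\mu,z_0}|^2 f\,d\omega\ge\nu(\mu,z_0)$; since the two pieces sum to $\lambda_n(\mu,z_0)$, this forces $C^{-2n}\int|P_{n,\mu,z_0}|^2 d\mu_s\to0$ and $C^{-2n}\int_\Gamma|P_{n,\mu,z_0}|^2 f\,d\omega\to\nu(\mu,z_0)$, so in particular $\{p_n\}$ is bounded in $H^2(\Omega,\mu)$.

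The heart of the argument is to show $p_n\rightharpoonup F_{\mu,z_0}$ weakly in $H^2(\Omega,\mu)$. Fix a dense class of ``nice'' $G\in H^2(\Omega,\mu)$, those with bounded H\"older boundary values $G^\pm$ on both sides of $\Gamma$ (dense because $\Gamma\in C^{1+}$ forces boundary regularity of $\Phi$ by Kellogg--Warschawski and because bounded outer functions are dense among Szeg\H o functions). By a Faber/Cauchy-transform construction I would produce polynomials $Q_n=Q_n^{(G)}$ of degree $\le n$ with $C^{-n}\Phi^{-n}Q_n\to G$ locally uniformly in $\Omega$, with $\|C^{-n}Q_n-(\Phi^{+n}G^++\Phi^{-n}G^-)\|_{L^2(\Gamma,f\,d\omega)}\to0$, and with $\sup_n\|C^{-n}Q_n\|_{L^\infty(\Gamma)}<\infty$. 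Substituting $Q=Q_n^{(G)}$ into the variational identity $\int P_{n,\mu,z_0}\,\overline Q\,d\mu=\overline{Q(z_0)}\,\lambda_n(\mu,z_0)$ (its monic version when $z_0=\infty$): the $\mu_s$-term is $O\big((C^{-2n}\int|P_{n,\mu,z_0}|^2 d\mu_s)^{1/2}\big)=o(1)$ by the uniform bound on $Q_n$ and the previous paragraph; the $f\,d\omega$-term equals $\langle p_n,G\rangle_{H^2(\Omega,\mu)}+o(1)$ once one pairs the single-valued $C^{-n}P_{n,\mu,z_0}$ with $\overline{\Phi^{+n}G^+}$ using $C^{-n}P_{n,\mu,z_0}=p_n^+\Phi^{+n}$ and with $\overline{\Phi^{-n}G^-}$ using $C^{-n}P_{n,\mu,z_0}=p_n^-\Phi^{-n}$, so that no oscillatory cross term survives; and the right-hand side tends to $\overline{G(z_0)}\,\nu(\mu,z_0)=\langle F_{\mu,z_0},G\rangle_{H^2(\Omega,\mu)}$ (reproducing kernel). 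Thus $\langle p_n,G\rangle\to\langle F_{\mu,z_0},G\rangle$ for all nice $G$; boundedness plus density give the weak convergence, and since point evaluations are bounded and $\{p_n\}$ is a normal family, $p_n=C^{-n}\Phi^{-n}P_{n,\mu,z_0}\to F_{\mu,z_0}$ locally uniformly in $\Omega$, which is the third assertion. I expect the main obstacle to be exactly this construction $G\mapsto Q_n^{(G)}$ with all three properties simultaneously, and above all its control near the two endpoints of $\Gamma$ under only the $C^{1+}$ hypothesis, where $\Phi$ has square-root behaviour and $\Phi'$, $\rho_{z_0}$ blow up — the very point where Widom used $C^{2+}$; the remedy should be the sharp Kellogg--Warschawski boundary estimates for $\Phi$ together with a careful local splitting of the Cauchy transform near the endpoints.

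For the $L^2$-statement put $u_n:=C^{-n}P_{n,\mu,z_0}\big|_\Gamma$ and $H_n$ as in the theorem. From the first paragraph $\|u_n\|_{L^2(\Gamma,f\,d\omega)}^2=C^{-2n}\int_\Gamma|P_{n,\mu,z_0}|^2 f\,d\omega\to\nu(\mu,z_0)$. Expanding $|H_n|^2$, its cross term is $2\,\mathrm{Re}\int_\Gamma(\Phi^+/\Phi^-)^n\,F_{\mu,z_0}^+\,\overline{F_{\mu,z_0}^-}\,f\,d\omega$, which tends to $0$ by the Riemann--Lebesgue lemma because $\arg(\Phi^+/\Phi^-)$ is strictly monotone along $\Gamma$ and $F_{\mu,z_0}^+\overline{F_{\mu,z_0}^-}f\in L^1(\omega)$, so $\|H_n\|_{L^2(\Gamma,f\,d\omega)}^2\to\nu(\mu,z_0)$. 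Finally the same single-valued rewriting as above gives, with no error term, $\langle u_n,H_n\rangle_{L^2(\Gamma,f\,d\omega)}=\langle p_n,F_{\mu,z_0}\rangle_{H^2(\Omega,\mu)}\to\|F_{\mu,z_0}\|_{H^2(\Omega,\mu)}^2=\nu(\mu,z_0)$ by the weak convergence just proved. Since $\nu(\mu,z_0)$ is real, the parallelogram law (uniform convexity of $L^2$) now gives $\|u_n-H_n\|_{L^2(\Gamma,f\,d\omega)}^2\to0$, i.e. the second assertion.
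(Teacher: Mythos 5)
Your proposal reaches all three assertions and the supporting computations check out, but for the second and third limits you take a genuinely different and considerably heavier route than the paper. The paper's key observation is that $p_n:=C^{-n}\Phi_{z_0}^{-n}P_{n,\mu,z_0}$ lies in $H^\infty(\Omega)\subset H^2(\Omega,\mu)$, so the reproducing-kernel identity $\oint_\Gamma p_n\,\overline{K_\mu(\cdot,w)}f_{z_0}\,d\omega_{z_0}=p_n(w)$ holds \emph{exactly} for every $w\in\Omega$ and every $n$; with $w=z_0$ this gives $\langle u_n,H_n\rangle_{L^2(f_{z_0}d\omega_{z_0})}=p_n(z_0)/K_\mu(z_0,z_0)=\nu(\mu,z_0)$ on the nose, which together with $\|H_n\|^2\to\nu$ (Widom's Lemma 12.1 for the cross term) and $\|u_n\|^2\le C^{-2n}\lambda_n\to\nu$ yields assertion two by direct expansion, and with general $w$ plus the just-proved $L^2$ convergence yields assertion three --- no weak-convergence argument, no dense class, and no Faber approximants are needed at this stage (the paper uses its Faber lemma only for the upper bound in Theorem~\ref{thm1}). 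Your detour through weak convergence $p_n\rightharpoonup F_{\mu,z_0}$ is workable --- the construction of $Q_n^{(G)}$ you flag as the main obstacle is exactly the paper's Lemma~\ref{lem2}, which handles the endpoint degeneracy by opening the arc with $\varphi$ and invoking Suetin's theorems on the resulting $C^{1+}$ Jordan curve, so the $C^{1+}$ hypothesis does suffice --- but it leaves two soft spots you should either close or bypass: (a) the density in $H^2(\Omega,\mu)$ of functions with bounded H\"older boundary values is asserted, not proved (it can be obtained by noting that polynomials in $1/\Phi_{z_0}$ are dense, via Beurling applied to the outer function implementing the isometry with $H^2(\D)$, and these are H\"older up to $\Gamma$ on both sides); and (b) your final step $\langle p_n,F_{\mu,z_0}\rangle\to\|F_{\mu,z_0}\|^2$ does not actually need the weak convergence at all, since $\langle p_n,F_{\mu,z_0}\rangle=p_n(z_0)/K_\mu(z_0,z_0)=\nu(\mu,z_0)$ identically --- observing this collapses most of your second paragraph. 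Your refinement that $C^{-2n}\int|P_{n,\mu,z_0}|^2\,d\mu_s\to0$ (by comparing $\lambda_n(\mu,z_0)$ with $\lambda_n(f_{z_0}d\omega_{z_0},z_0)$) is correct and slightly sharper than what the paper uses, though not required for the stated conclusions.
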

It is conjectured in \cite{BEZ25} that the limits of $L^\infty$- and $L^2$-Widom factors for the harmonic measure are closely related and are given by the reproducing kernel associated to the harmonic measure $\omega_{z_0}$. The extremal property in Corollary \ref{cor2} gives a hint at the distinct role played by the harmonic measure.
\begin{corollary}\label{cor2}
    Let $\Gamma \in C^{1+}$ be a Jordan arc, $\mu={f_{z_0}}d\omega_{z_0} + \mu_s$ a positive and finite measure with singular part $\mu_s$ and $z_0\in \Cinf\backslash \Gamma$. Then
    \begin{align*}
        \lim_{n\to\infty} W_{2,n}(\mu,z_0)^2 = \frac{1}{K_{\omega_{z_0}}(z_0,z_0)} \exp\left(\int_\Gamma \log f_{z_0}\,d\omega_{z_0}\right).
    \end{align*}
    Furthermore, the harmonic measure $\omega_{z_0}$ is the unique maximizer of $\lim_{n\to\infty}W_{2,n}(\mu,z_0)$ over all probability measures $\mu$ supported on $\Gamma$.
\end{corollary}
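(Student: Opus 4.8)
The plan is to obtain both assertions of Corollary~\ref{cor2} directly from Theorem~\ref{thm1} and Theorem~\ref{thm3} by first specializing to the harmonic measure $\mu=\omega_{z_0}$ itself. Applying Theorem~\ref{thm1} to $\omega_{z_0}$ (so that $f_{z_0}\equiv 1$ and $\mu_s=0$) gives
\[
\lim_{n\to\infty}W_{2,n}(\omega_{z_0},z_0)^2=(1-\Phi_{z_0}(z_0)^{-2})\,\frac{2\pi}{|\Phi_{z_0}'(z_0)|}\exp\left(\int_\Gamma\log\rho_{z_0}\,d\omega_{z_0}\right).
\]
On the other hand $\omega_{z_0}$ trivially satisfies the Szeg\H o condition, since $\int_\Gamma\log 1\,d\omega_{z_0}=0>-\infty$, so Theorem~\ref{thm3} applies; using $C(\Gamma,z_0)>0$, which gives $W_{2,n}(\mu,z_0)^2=C(\Gamma,z_0)^{-2n}\lambda_n(\mu,z_0)$, together with the Fischer--Riesz identification $\nu(\omega_{z_0},z_0)=1/K_{\omega_{z_0}}(z_0,z_0)$ recorded before \eqref{intro8}, we obtain $\lim_{n\to\infty}W_{2,n}(\omega_{z_0},z_0)^2=1/K_{\omega_{z_0}}(z_0,z_0)$. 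Comparing the two identities pins down the purely geometric constant
\[
\frac{1}{K_{\omega_{z_0}}(z_0,z_0)}=(1-\Phi_{z_0}(z_0)^{-2})\,\frac{2\pi}{|\Phi_{z_0}'(z_0)|}\exp\left(\int_\Gamma\log\rho_{z_0}\,d\omega_{z_0}\right).
\]

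For the first formula of the corollary I would then return to \eqref{eq:widom_limit} for an arbitrary finite $\mu$, split $\log(f_{z_0}\rho_{z_0})=\log f_{z_0}+\log\rho_{z_0}$ as an identity of $[-\infty,\infty)$-valued integrands (with the convention $\exp(-\infty)=0$ when $\mu$ is not in the Szeg\H o class), and factor the exponential. The factor carrying $\rho_{z_0}$ together with the geometric prefactor is exactly $1/K_{\omega_{z_0}}(z_0,z_0)$ by the previous step, which yields
\[
\lim_{n\to\infty}W_{2,n}(\mu,z_0)^2=\frac{1}{K_{\omega_{z_0}}(z_0,z_0)}\exp\left(\int_\Gamma\log f_{z_0}\,d\omega_{z_0}\right).
\]

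For the extremal statement, the only dependence of the right-hand side on $\mu$ sits in the factor $\exp(\int_\Gamma\log f_{z_0}\,d\omega_{z_0})$; hence, since $W_{2,n}\ge 0$, maximizing $\lim_{n\to\infty}W_{2,n}(\mu,z_0)$ over probability measures $\mu=f_{z_0}d\omega_{z_0}+\mu_s$ on $\Gamma$ is the same as maximizing $\int_\Gamma\log f_{z_0}\,d\omega_{z_0}$. Because $\omega_{z_0}$ is a probability measure, Jensen's inequality gives $\int_\Gamma\log f_{z_0}\,d\omega_{z_0}\le\log\int_\Gamma f_{z_0}\,d\omega_{z_0}$, and the normalization $\int_\Gamma f_{z_0}\,d\omega_{z_0}+\mu_s(\Gamma)=1$ forces the right-hand side to be $\le 0$; equality holds precisely when $f_{z_0}$ is $\omega_{z_0}$-a.e.\ equal to a constant and $\mu_s=0$, i.e.\ when $\mu=\omega_{z_0}$. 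Thus $\omega_{z_0}$ is the unique maximizer, with maximal value $1/K_{\omega_{z_0}}(z_0,z_0)$. I do not anticipate a genuine obstacle here, since the analytic substance is already contained in Theorems~\ref{thm1} and \ref{thm3}; what remains is bookkeeping --- checking that $K_{\omega_{z_0}}(z_0,z_0)\in(0,\infty)$ so the reciprocal is meaningful (this follows from $H^2(\Omega,\omega_{z_0})$ being a nontrivial reproducing kernel Hilbert space), keeping track of the $\exp(-\infty)=0$ convention for measures outside the Szeg\H o class, and observing that finitely supported probability measures --- for which $\lambda_n(\mu,z_0)$ eventually vanishes --- do not interfere, the maximal value $1/K_{\omega_{z_0}}(z_0,z_0)$ being strictly positive.
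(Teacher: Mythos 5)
Your proposal is correct, and the second half (Jensen's inequality plus the normalization $\int_\Gamma f_{z_0}\,d\omega_{z_0}+\mu_s(\Gamma)=1$, with equality forcing $f_{z_0}$ constant a.e.\ and $\mu_s=0$) is exactly the paper's argument. The difference lies in how you identify the constant $1/K_{\omega_{z_0}}(z_0,z_0)$. The paper reads it off directly from the explicit reproducing-kernel formula: Remark~\ref{rem1}, equation \eqref{formula_nu}, already records $\nu(\mu,z_0)=R_{f_{z_0}}(z_0)/K_{\omega_{z_0}}(z_0,z_0)$, so the first display of the corollary is an immediate consequence of the last line of the proof of Theorem~\ref{thm1}, which is stated in the form $\lim_n W_{2,n}(\mu,z_0)^2=R_{f_{z_0}}(z_0)/K_{\omega_{z_0}}(z_0,z_0)$. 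You instead treat Theorems~\ref{thm1} and~\ref{thm3} as black boxes, specialize both to $\mu=\omega_{z_0}$, and equate the two resulting expressions to pin down the geometric constant; this is a legitimate and self-contained alternative that avoids invoking the internal formula \eqref{rep_kern}, at the cost of an extra bookkeeping step --- the splitting $\log(f_{z_0}\rho_{z_0})=\log f_{z_0}+\log\rho_{z_0}$ inside \eqref{eq:widom_limit}. That splitting is valid, but you should say why $\int_\Gamma\log\rho_{z_0}\,d\omega_{z_0}$ is finite (it is, since Section~2 shows $\rho_{z_0}\sqrt{|z-A|\,|z-B|}$ is positive and $C^{0+}$ on $\Gamma$, and the square-root singularities at the endpoints are $\omega_{z_0}$-log-integrable); without that, the factorization of the exponential would be ill-defined. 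Your closing observations (positivity and finiteness of $K_{\omega_{z_0}}(z_0,z_0)$, the $\exp(-\infty)=0$ convention, measures with finite support) are sensible sanity checks that the paper leaves implicit.
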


In Section 2, we provide necessary estimates and prerequisites for the proofs of Theorems \ref{thm1} and \ref{thm3} in Section 3. This includes a discussion of the Hardy space $H^2(\Omega,\mu)$.

\subsection*{Acknowledgment}
This research was carried out at TU Wien, supported by project P33885-N of the Austrian Science Fund FWF. I want to thank Benjamin Eichinger for giving me the opportunity to work on this project and supporting me in the writing of this paper.

\section{Estimates}
First, we show an explicit formula for the density of the harmonic measure using the conformal map $\Phi_{z_0}$. 
An important fact from Widom \cite[Section 11]{widom} is that we can 'open up' the arc. Let $A, B$ be the endpoints of $\Gamma$. Then we define the conformal map 
\begin{align*}
    \varphi^{-1}(z)=\frac{2z-A-B}{B-A} + \frac{2}{B-A}\sqrt{(z-A)(z-B)},
\end{align*}
where the branch of the square root is such that $\sqrt{(z-A)(z-B)}$ is asymptotically $z$ as $z\to\infty$. Then $\varphi^{-1}$ maps $\Omega$ to the outside $\Omega'$ of a Jordan curve $\Gamma'$ where $0$ lies in the interior. Lemma 11.1 in \cite{widom} shows that $\Gamma'$ is a $C^{1+} $ Jordan curve. Furthermore,  
 \begin{align}\label{eq:20}
    (\varphi^{-1})'(z)= \frac{\varphi^{-1}}{\sqrt{(z-A)(z-B)}},
\end{align}
and the function $\varphi$ is also explicitly given by 
\begin{align*}
    \varphi(s)= ((B-A)(s+s^{-1})/2 +B+A)/2.
\end{align*} 
The square root $\sqrt{(z-A)(z-B)}$ has different boundary values depending on the side of $\Gamma$ and thus $\varphi^{-1}$ extends continuously differentiable to the boundary $\Gamma$ as $\varphi^{-1}_+$ and $\varphi^{-1}_-$. They satisfy $\varphi^{-1}_+ = 1/\varphi^{-1}_-$ because $\varphi(s)=\varphi(t) \iff s= t \lor s=t^{-1}$.

The conformal map $\Phi_{z_0}\circ \varphi$ extends continuously to a nonzero $C^{1+}$-function on $\Gamma'$ by \cite{garnett}. Then, $\Phi_{z_0}$ extends continuously differentiable to $\Gamma$ without the endpoints as $\Phi_{z_0}^\pm = (\Phi_{z_0}\circ \varphi) \circ\varphi^{-1}_\pm$. We can also immediately see from the formula of the derivative of $\varphi^{-1}$ that 
\begin{align*}
    |(\Phi_{z_0}^{\pm})'|\sqrt{|\cdot-A||\cdot-B|}
\end{align*}
defines a nonzero $C^{0+}$-function on $\Gamma$, see also \cite[Appendix A]{totik}. 

For $z_0\in\Omega$, let $B_{z_0}$ denote the conformal map from $\Omega$ to $\Cinf\setminus\overline{\D}$ such that $B_{z_0}(z_0)=\infty$ and $B_{z_0}'(\infty)>0$. Note that $\Phi_\infty= B_\infty $ and $\Phi_{z_0}=\frac{|B_{\infty}(z_0)|}{B_\infty(z_0)}B_\infty$ for $z_0\ne\infty$. The harmonic measure $\omega_{z_0}$ of the domain $\Omega$ and point $z_0\in \Omega$ is then given by 
\begin{align}\label{eq:1}
    \omega_{z_0}= \frac{1}{2\pi}\left(|(B_{z_0}^{+})'(z)| + |(B_{z_0}^{-})'(z)|\right) |dz|
\end{align}
where $|dz|$ is the arc-length measure on $\Gamma$, and $(B_{z_0}^{\pm})'(z)$ denote the two boundary values of the function $B_{z_0}'(\cdot)$. For $z_0=\infty$, this is well known, see e.g. \cite{Alp22}, and for arbitrary $z_0\in\Omega$, this follows directly by conformal invariance of all related quantities. Let $\rho_{z_0}$ be the density of $\omega_{z_0}$ with respect to $|dz|$.
Note that there exists an unimodular constant $c$ so that 
    \begin{align}\label{eq:3}
        B_{z_0}(z)=c\frac{1-\Phi_{z_0}(z)\Phi_{z_0}(z_0)}{\Phi_{z_0}(z)-\Phi_{z_0}(z_0)}.
    \end{align}
    Then,
    \begin{align}\label{eq:4}
        B_{z_0}'(z)=c\frac{(\Phi_{z_0}(z_0)^2-1)\Phi'_{z_0}(z)}{(\Phi_{z_0}(z)-\Phi_{z_0}(z_0))^2}.
    \end{align}
Then, \eqref{eq:1} allows us to give a formula of $\rho_{z_0}$ in terms of $\Phi_{z_0}$
\begin{align*}
    \rho_{z_0} = \frac{\Phi_{z_0}(z_0)^2-1}{2\pi} \left(\frac{|(\Phi_{z_0}^+)'(z)|}{\left|\Phi_{z_0}^+(z)- \Phi_{z_0}(z_0)\right|^2} + \frac{|(\Phi_{z_0}^-)'(z)|}{\left|\Phi_{z_0}^-(z)- \Phi_{z_0}(z_0)\right|^2}\right).
\end{align*}
Since $ |(\Phi_{z_0}^\pm)'(z)|\sqrt{|z-A||z-B|}$ is a positive $C^{0+}$-function on $\Gamma$ and $0<c_1<\left|\Phi_{z_0}^\pm(z)- \Phi_{z_0}(z_0)\right|<c_2<\infty$, we know that $\rho_{z_0}\sqrt{|z-A||z-B|}$ is also positive and $C^{0+}$ on $\Gamma$.

We want to prove the properties of $H^2(\Omega,\mu)$.
\begin{proposition}\label{prop3.2}
    Let $\Gamma$ be a $C^{1+}$-Jordan arc. Let $\mu = {f_{z_0}} d\omega_{z_0} + \mu_s$ be a positive and finite measure supported on $\Gamma$ that satisfies the Szeg\H o condition. We define $R_\mu= R_{f_{z_0}}R_{\rho_{z_0}}$. Then, $H^2(\Omega,\mu)$ is a reproducing kernel Hilbert space with reproducing kernel
    \begin{align}\label{rep_kern}
        K_\mu(z,w)= \frac{1}{2\pi}\sqrt{\frac{\Phi_{z_0}'(z)}{R_\mu(z)}}\overline{\sqrt{\frac{\Phi_{z_0}'(w)}{R_\mu(w)}}}\frac{1}{1- 1/(\Phi_{z_0}(z)\overline{\Phi_{z_0}(w)})}.
    \end{align}
    Every $F\in H^2(\Omega,\mu)$ has $|dz|$-a.e. boundary values $F_+,F_-$ on both sides of $\Gamma$ with $F_+,F_- \in L^2({f_{z_0}}d\omega_{z_0})$ and the norm is given by 
\begin{align}\label{h2_norm}
    \|F\|_{H^2(\Omega,\mu)} = \left(\oint_\Gamma |F|^2 {f_{z_0}}d\omega_{z_0} \right)^{1/2}.
\end{align}
\end{proposition}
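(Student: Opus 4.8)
The plan is to reduce everything to the disc via the conformal maps already set up, where the analogous statements are classical. Concretely, for $F \in \Hol(\Omega)$ consider the transplanted function $g(\zeta) := F(\Phi_{z_0}^{-1}(1/\zeta))\sqrt{(\Phi_{z_0}^{-1})'(1/\zeta)/\zeta^2}\cdot(\text{Szeg\H o factor})$, chosen so that $F \in H^2(\Omega,\mu)$ if and only if $g \in H^2(\D)$; the weight $R_\mu = R_{f_{z_0}}R_{\rho_{z_0}}$ is built precisely so that the outer function $\sqrt{R_\mu}$ converts the $L^2(f_{z_0}\,d\omega_{z_0})$-norm on $\Gamma$ into a flat $L^2(dm)$-norm on $\partial\D$. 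First I would make this change of variables fully explicit: write $\Phi_{z_0}\circ\varphi$ as a conformal map of $\Gamma'$ (a genuine $C^{1+}$ Jordan \emph{curve}, by Widom's Lemma 11.1), note that on $\partial\D$ the two boundary values $\Phi_{z_0}^{\pm}$ correspond to the unit circle traversed against the two sides of $\Gamma$, and use the formula for $\rho_{z_0}$ derived above to check that $|\sqrt{\Phi_{z_0}'/R_\mu}|^2 f_{z_0}\rho_{z_0}\,|dz|$ pulls back to $dm$ up to the Jacobian already accounted for. Since $\Gamma'$ is $C^{1+}$, the Smirnov/Hardy theory on $\Omega'=\Cinf\setminus\overline{\Gamma'}$ (equivalently, on $\D$ after inversion) gives that $H^2$-functions have nontangential boundary values $m$-a.e. with the expected $L^2$ norm identity, which transplants to the existence of $F_+,F_-$ and \eqref{h2_norm}.

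Second, I would establish the reproducing kernel formula. On the disc side the reproducing kernel of $H^2(\D)$ is the Szeg\H o kernel $\frac{1}{1-\zeta\bar\eta}$; undoing the unitary change of variables $F \mapsto g$ multiplies the kernel by the appropriate factors $\sqrt{\Phi_{z_0}'(z)/R_\mu(z)}\,\overline{\sqrt{\Phi_{z_0}'(w)/R_\mu(w)}}$ and substitutes $\zeta = 1/\Phi_{z_0}(z)$, $\eta = 1/\Phi_{z_0}(w)$, which after simplification is exactly \eqref{rep_kern} (the $\frac{1}{2\pi}$ reflects the normalization $dm$ versus $|d\zeta|$ and the two-sided contour $\oint$). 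One has to check that $K_\mu(\cdot,w)$, so defined, genuinely lies in $H^2(\Omega,\mu)$ — i.e.\ that the square-root branches $\sqrt{\Phi_{z_0}'/R_\mu}$ are single-valued analytic functions on $\Omega$ — which holds because $\Omega$ is simply connected and $\Phi_{z_0}'/R_\mu$ is analytic and nonvanishing there (since $R_{f_{z_0}},R_{\rho_{z_0}}$ are nonvanishing outer functions and $\Phi_{z_0}'\ne 0$), and because $\rho_{z_0}\sqrt{|\cdot-A||\cdot-B|}$ being a positive $C^{0+}$ function ensures $\log\rho_{z_0} \in L^1$ so that $R_{\rho_{z_0}}$ makes sense. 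The reproducing property then follows from the disc case by the unitary equivalence.

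The main obstacle I expect is the careful bookkeeping at the endpoints $A,B$ of $\Gamma$ and, relatedly, the precise statement about two-sided boundary values. Near $A$ and $B$ the derivative $(\Phi_{z_0}^{\pm})'$ blows up like $|z-A|^{-1/2}$, so $\rho_{z_0}$ is only in $L^1$, not bounded, and the square-root factor $\sqrt{\Phi_{z_0}'/R_\mu}$ has a mild (integrable-square) singularity; I need to confirm that the transplanted Hardy space on the \emph{smooth curve} $\Gamma'$ really does capture this — this is exactly why the opening-up map $\varphi$ is used, since it trades the two endpoints of the arc for a smooth closed curve and the $|z-A|^{-1/2}$ growth is absorbed into the factor $(\varphi^{-1})' = \varphi^{-1}/\sqrt{(z-A)(z-B)}$. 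Once the identification with $H^2(\Omega')$ is clean, the fact that $\Gamma'$ is $C^{1+}$ lets me invoke standard results (e.g.\ that $C^{1+}$ Jordan curves are rectifiable with the Smirnov property, so $H^2$ of the exterior domain behaves like $H^2(\D)$) without further ado, and the two boundary values $F_{\pm}$ arise because the single boundary function on $\partial\D$ restricts to the two preimages $\varphi^{-1}_{\pm}$ of each point of $\Gamma$. I would also double-check the orientation/normalization constants so that the $\oint_\Gamma = \int_\Gamma(|F_+|^2+|F_-|^2)$ identity comes out with the stated $\frac{1}{2\pi}$ and no stray factor of $2$.
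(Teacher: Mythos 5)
Your plan is the paper's proof: transplant to the disc via $\zeta=1/\Phi_{z_0}(z)$ together with an outer multiplier built from $R_\mu=R_{f_{z_0}}R_{\rho_{z_0}}$ and $\Phi_{z_0}'$, identify $H^2(\Omega,\mu)$ isometrically with $H^2(\D)$, pull back the Szeg\H o kernel $1/(1-\zeta\bar\eta)$, and obtain the two boundary values $F_\pm$ from the classical boundary behaviour on $\partial\D$ via the two branches $\Phi_{z_0}^\pm$; the regularity input ($\rho_{z_0}\sqrt{|\cdot-A||\cdot-B|}$ and $|(\Phi_{z_0}^\pm)'|\sqrt{|\cdot-A||\cdot-B|}$ positive and $C^{0+}$, via the opening-up map $\varphi$) is exactly what the paper uses to make $\sqrt{R_{\rho_{z_0}}}/\sqrt{\Phi_{z_0}'}$ and its reciprocal bounded analytic.

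There is, however, one concrete error in your transplant as written. You include the full Jacobian $\sqrt{(\Phi_{z_0}^{-1})'(1/\zeta)/\zeta^{2}}$ of the map $\zeta\mapsto\Phi_{z_0}^{-1}(1/\zeta)$. On $\partial\D$ the extra factor $1/\zeta$ is unimodular, so your norm identity is unaffected; but at $\zeta=0$ (the image of $z=\infty$) the term $(\Phi_{z_0}^{-1})'(1/\zeta)/\zeta^{2}$ has a double pole, since $(\Phi_{z_0}^{-1})'(w)$ tends to the finite nonzero limit $1/\Phi_{z_0}'(\infty)$ as $w\to\infty$. Hence your $g$ acquires a simple pole at $0$ unless $F(\infty)=0$, so ``$g\in H^2(\D)$'' characterizes only the functions of $H^2(\Omega,\mu)$ vanishing at $\infty$ --- in particular it excludes constants, contradicting $H^\infty(\Omega)\subseteq H^2(\Omega,\mu)$ (Remark \ref{rem1}) --- and the transported kernel would be that of $\zeta H^2(\D)$, namely $\zeta\bar\eta/(1-\zeta\bar\eta)$, giving $1/(\Phi_{z_0}(z)\overline{\Phi_{z_0}(w)}-1)$ instead of the stated last factor in \eqref{rep_kern}. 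The correct multiplier is the one the paper uses, $\sqrt{R_\mu(z)/\Phi_{z_0}'(z)}$ with $\Phi_{z_0}'$ evaluated in $\Omega$ and no extra $\zeta^{-2}$: the boundary change of variables is $w=\Phi_{z_0}^\pm(z)$ with $|dw|=|(\Phi_{z_0}^\pm)'|\,|dz|$, and the subsequent inversion $w\mapsto 1/w$ of $\partial\D$ preserves arc length, so no further Jacobian is needed. Since you flagged the normalization for checking, this is a repairable slip rather than a wrong approach, but as stated the step would fail.
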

\begin{proof}
    We have seen that 
    \begin{align*}
        \rho_{z_0}\sqrt{|z-A||z-B|}, |(\Phi_{z_0}^\pm)'|\sqrt{|z-A||z-B|} 
    \end{align*}
    are $C^{0+}$-functions on $\Gamma$. Then the maximum principle shows that 
    \begin{align*}
        \frac{\sqrt{R_{\rho_{z_0}}}}{\sqrt{\Phi_{z_0}'}} \text{ and }\frac{\sqrt{\Phi_{z_0}'}}{\sqrt{R_{\rho_{z_0}}}}
    \end{align*}
    are holomorphic and bounded in $\Omega$. Then 
    \begin{align*}
        F \in H^2(\Omega,\mu) &\iff z \mapsto \sqrt{R_{f_{z_0}}(\Phi_{z_0}^{-1}(1/z))}F(\Phi_{z_0}^{-1}(1/z)) \in H^2(\D) \\
        &\iff z\mapsto \sqrt{\frac{R_\mu(\Phi_{z_0}^{-1}(1/z))}{\Phi_{z_0}'(\Phi_{z_0}^{-1}(1/z))}}F(\Phi_{z_0}^{-1}(1/z)) \in H^2(\D) 
    \end{align*}
    Therefore we can rewrite the definition of $H^2(\Omega,\mu)$ to 
    \begin{align}
        H^2(\Omega,\mu ) = \left\{ \frac{1}{\sqrt{2\pi}}\sqrt{\frac{\Phi_{z_0}'}{R_\mu}}  G\circ (1/\Phi_{z_0}):G\in H^2(\D) \right\}.
    \end{align}
    If we introduce a scalar product on $H^2(\Omega,\mu)$ such that the bijective map
    \begin{align*}
        \kappa: H^2(\D)\ni G \mapsto \frac{1}{\sqrt{2\pi}}\sqrt{\frac{\Phi_{z_0}'}{R_\mu}}  G\circ (1/\Phi_{z_0}) \in H^2(\Omega,\mu)
    \end{align*}
    becomes an isometry, then $H^2(\Omega,\mu)$ becomes a reproducing kernel Hilbert space with reproducing kernel
    \begin{align*}
        K_\mu(z,w)= \frac{1}{2\pi}\sqrt{\frac{\Phi_{z_0}'(z)}{R_\mu(z)}}\overline{\sqrt{\frac{\Phi_{z_0}'(w)}{R_\mu(w)}}}\frac{1}{1- 1/(\Phi_{z_0}(z)\overline{\Phi_{z_0}(w)})}
    \end{align*}
    because $1/(1-z\overline{w})$ is the reproducing kernel of $H^2(\D)$. A simple calculation shows that formula \eqref{h2_norm} follows from the definition of $\kappa$ and the fact that $\Phi_{z_0}^\pm$ is a diffeomorphism from $\Gamma$ onto its image. 
\end{proof}
We can also characterize the reproducing kernel using another outer function instead of $R_\mu$, which simplifies the formula for $F_{\mu,z_0}$.
\begin{lemma}
 Let $\Gamma$ be a $C^{1+}$-Jordan arc. Let $\mu = {f_{z_0}} d\omega_{z_0} + \mu_s$ be a positive and finite measure supported on $\Gamma$ that satisfies the Szeg\H o condition. Then,
\[
R(z)=\frac{R_{\rho_{z_0}}(z)B_{z_0}(z)^2}{\Phi_{z_0}(z)^2B_{z_0}'(z)}
\]
is the outer function with boundary values $|R_\pm(z)|={\rho_{z_0}(z)}/{|(B_{z_0}^{\pm})'(z)|}$ on $\Gamma$. Moreover,
\begin{align}\label{rep_kern_2}
     K_{\mu}(z,w)=\frac{1}{2\pi}\frac{1}{\sqrt{R(z)R_{f_{z_0}}(z)}}\frac{1}{\overline{\sqrt{R(w)R_{f_{z_0}}(w)}}}\frac{(\Phi_{z_0}(z)\Phi_{z_0}(z_0)-1)(\Phi_{z_0}(z_0)\overline{\Phi_{z_0}(w)}-1)}{(\Phi_{z_0}(z_0)^2-1)(\Phi_{z_0}(z)\overline{\Phi_{z_0}(w)}-1)}
\end{align}
\end{lemma}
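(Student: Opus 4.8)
The plan is to verify the two assertions of the lemma in turn, and then reconcile the new formula \eqref{rep_kern_2} with the one already proved in Proposition~\ref{prop3.2}. First I would check that $R$ is an outer function with the claimed boundary modulus. By \eqref{eq:3} and \eqref{eq:4} the ratio $B_{z_0}(z)^2/(\Phi_{z_0}(z)^2 B_{z_0}'(z))$ is an explicit rational expression in $\Phi_{z_0}$, namely
\begin{align*}
    \frac{B_{z_0}(z)^2}{\Phi_{z_0}(z)^2 B_{z_0}'(z)} = c\,\frac{(1-\Phi_{z_0}(z)\Phi_{z_0}(z_0))^2}{\Phi_{z_0}(z)^2(\Phi_{z_0}(z_0)^2-1)\Phi_{z_0}'(z)},
\end{align*}
so that multiplying by $R_{\rho_{z_0}}$ gives a function built from $R_{\rho_{z_0}}$, $\Phi_{z_0}$, and $\Phi_{z_0}'$. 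One then argues, exactly as in the proof of Proposition~\ref{prop3.2} (maximum principle applied to $\rho_{z_0}\sqrt{|z-A||z-B|}$ and $|(\Phi_{z_0}^\pm)'|\sqrt{|z-A||z-B|}$, which are positive $C^{0+}$), that this function is holomorphic and nonvanishing in $\Omega$, hence outer; and that the boundary values of $|\Phi_{z_0}'|$ versus $|B_{z_0}'|$ differ precisely by the Jacobian factor in \eqref{eq:4}, yielding $|R_\pm| = \rho_{z_0}/|(B_{z_0}^\pm)'|$ on $\Gamma$ after using $|R_{\rho_{z_0},\pm}| = \rho_{z_0}$.

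Next I would establish \eqref{rep_kern_2}. The cleanest route is purely algebraic: start from the formula \eqref{rep_kern} for $K_\mu$ in Proposition~\ref{prop3.2}, substitute $R_\mu = R_{f_{z_0}}R_{\rho_{z_0}}$ and the definition of $R$, and simplify. Concretely, $\sqrt{\Phi_{z_0}'/R_\mu} = \sqrt{\Phi_{z_0}'/(R_{f_{z_0}}R_{\rho_{z_0}})}$, while $\sqrt{R R_{f_{z_0}}} = \sqrt{R_{\rho_{z_0}}R_{f_{z_0}}}\cdot\sqrt{B_{z_0}^2/(\Phi_{z_0}^2 B_{z_0}')}$, so the prefactor $1/\sqrt{R(z)R_{f_{z_0}}(z)}$ equals $\sqrt{\Phi_{z_0}'(z)/R_\mu(z)}$ times $\sqrt{\Phi_{z_0}(z)^2 B_{z_0}'(z)/(\Phi_{z_0}'(z)B_{z_0}(z)^2)}$. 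Tracking the remaining $\Phi_{z_0}$-factors, using \eqref{eq:3}–\eqref{eq:4} to express $B_{z_0}$ and $B_{z_0}'$, one checks that the product of the extra $z$- and $w$-prefactors with the Szeg\H o kernel $1/(1-1/(\Phi_{z_0}(z)\overline{\Phi_{z_0}(w)}))$ collapses to the rational factor
\begin{align*}
    \frac{(\Phi_{z_0}(z)\Phi_{z_0}(z_0)-1)(\Phi_{z_0}(z_0)\overline{\Phi_{z_0}(w)}-1)}{(\Phi_{z_0}(z_0)^2-1)(\Phi_{z_0}(z)\overline{\Phi_{z_0}(w)}-1)}
\end{align*}
displayed in \eqref{rep_kern_2}; the unimodular constant $c$ from \eqref{eq:3} cancels between $R(z)$ and $\overline{R(w)}$ together with the branch of the square roots, so no ambiguity survives. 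As a sanity check one can verify that setting $z=w$ and $f_{z_0}\equiv 1$, $\rho_{z_0}$ reproduces the Widom constant appearing in Corollary~\ref{cor2}.

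The main obstacle is bookkeeping rather than conceptual: one must be careful about branches of the square roots of $R$, $R_{\rho_{z_0}}$, $\Phi_{z_0}'$, and of $B_{z_0}^2/(\Phi_{z_0}^2 B_{z_0}')$, so that the normalizations $R_\mu(z_0)>0$, $R(z_0)>0$, $\Phi_{z_0}(z_0)>0$ (or $\Phi_\infty'(\infty)>0$) are mutually consistent and the unimodular constant $c$ truly drops out; a single sign error there would spoil the identity. It is worth treating the case $z_0=\infty$ separately at the end, where $\Phi_{z_0}(z_0)$ is replaced by a limit and the rational factor in \eqref{rep_kern_2} degenerates appropriately — but this is a routine limiting argument once the generic case is in hand. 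The holomorphy/boundary-modulus part of the first claim is immediate from the machinery already built in Proposition~\ref{prop3.2}, so essentially all the real work is the explicit simplification just outlined.
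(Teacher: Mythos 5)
Your proposal follows essentially the same route as the paper: it establishes that $R$ is outer with the stated boundary modulus by using \eqref{eq:3}--\eqref{eq:4} to write $B_{z_0}^2/(\Phi_{z_0}^2B_{z_0}')$ as an explicit nonvanishing expression in $\Phi_{z_0}$ and $\Phi_{z_0}'$, and then obtains \eqref{rep_kern_2} by substituting this into the kernel formula \eqref{rep_kern} of Proposition~\ref{prop3.2} and simplifying. The bookkeeping concerns you raise (branches of the square roots, cancellation of the unimodular constant $c$) are legitimate but routine, and the paper handles them implicitly in exactly the same algebraic computation.
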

\begin{proof}
    We know that $\Phi'_{z_0}$ is analytic and non-vanishing in $\Omega$. From \eqref{eq:3} and \eqref{eq:4}, it follows that the same holds for 
    \begin{align}\label{eq:5}
        \frac{B_{z_0}'(z)\Phi_{z_0}(z)^2}{B_{z_0}(z)^2}=\frac{(\Phi_{z_0}(z_0)^2-1)\Phi'_{z_0}(z)\Phi_{z_0}(z)^2}{(1-\Phi_{z_0}(z_0)\Phi_{z_0}(z))^2},
    \end{align}
    Moreover, for $z\in\Gamma\setminus\{A,B\}$ we have
    \[
    \left|\frac{R_{\rho_{z_0}}(z)B_{z_0}^\pm(z)^2}{(B_{z_0}^{\pm})'(z)\Phi_{z_0}^\pm(z)^2}\right|=\frac{\rho_{z_0}(z)}{|(B_{z_0}^{\pm})'(z)|}.
    \]
    Rearranging \eqref{eq:5} gives
    \[
    \Phi'_{z_0}(z)=\frac{(1-\Phi_{z_0}(z_0)\Phi_{z_0}(z))^2}{(\Phi_{z_0}(z_0)^2-1)\Phi_{z_0}(z)^2}\frac{B_{z_0}'(z)\Phi_{z_0}(z)^2}{B_{z_0}(z)^2},
    \]
    and
    \begin{align*}
        \frac{\Phi_{z_0}'(z)}{R_{\rho_{z_0}}(z)R_{f_{z_0}}(z)}=\frac{(1-\Phi_{z_0}(z_0)\Phi_{z_0}(z))^2}{(\Phi_{z_0}(z_0)^2-1)\Phi_{z_0}(z)^2}\frac{B_{z_0}'(z)\Phi_{z_0}(z)^2}{B_{z_0}(z)^2R_{\rho_{z_0}}(z)R_{f_{z_0}}(z)}=\frac{(1-\Phi_{z_0}(z_0)\Phi_{z_0}(z))^2}{(\Phi_{z_0}(z_0)^2-1)\Phi_{z_0}(z)^2}\frac{1}{R(z)R_{f_{z_0}(z)}}.
    \end{align*}
    Using \eqref{rep_kern}, we conclude that 
    \begin{align*}
        K_{\mu}(z,w)=\frac{1}{2\pi}\frac{1}{\sqrt{R(z)R_{f_{z_0}}(z)}}\frac{1}{\overline{\sqrt{R(w)R_{f_{z_0}}(w)}}}\frac{(\Phi_{z_0}(z)\Phi_{z_0}(z_0)-1)(\Phi_{z_0}(z_0)\overline{\Phi_{z_0}(w)}-1)}{(\Phi_{z_0}(z_0)^2-1)(\Phi_{z_0}(z)\overline{\Phi_{z_0}(w)}-1)}
    \end{align*}
\end{proof}
\begin{remark}\label{rem1}Let $\mu=f_{z_0}d\omega_{z_0}+\mu_s$ be a positive and finite measure supported on $\Gamma$ satisfying the Szeg\H o condition. Then we note two things:
\begin{enumerate}
    \item Since $R_{f_{z_0}} \circ \Phi_{z_0}^{-1}(1/\cdot)$ is the outer function in $\D$ with boundary values $f_{z_0} \circ \Phi_{z_0}^{-1}(1/\cdot)$ on $\T$ we know that $\sqrt{R_{f_{z_0}}}\in H^2(\Omega)$ and therefore $H^\infty(\Omega)\subseteq H^2(\Omega,\mu)$.
    \item From formulas \eqref{rep_kern} and \eqref{rep_kern_2} we get
    \begin{align}\label{formula_nu}
        \nu(\mu,z_0) &=\frac{1}{K_\mu(z_0,z_0)}= 2\pi (1-\Phi_{z_0}(z_0)^{-2}) \frac{R_\mu(z_0)}{|\Phi_{z_0}'(z_0)|}=\frac{R_{f_{z_0}}(z_0)}{K_{\omega_{z_0}}(z_0,z_0)}=2\pi R_{f_{z_0}}(z_0) R(z_0).
    \end{align}
    Formula \eqref{rep_kern_2} also proves an explicit formula for the extremal function $F_{\mu,z_0}$
    \begin{align}\label{formula_F_mu}
        F_{\mu,z_0} (z) &= \frac{K_\mu(z,z_0)}{K_\mu(z_0,z_0)}=\sqrt{\frac{R_{f_{z_0}}(z_0)R(z_0)}{R_{f_{z_0}}(z)R(z)}}.
    \end{align}
\end{enumerate}
\end{remark}

We will later use the fact that $F_{\mu,z_0} \circ \varphi$ extends continuously to a $C^{0+}$-function on the transformed Jordan curve $\Gamma'$.
\begin{lemma}\label{lem1}
    Let $\Gamma \in C^{1+}$ be a Jordan arc, $\mu={f_{z_0}}d\omega_{z_0} + \mu_s$ a positive and finite measure with singular part $\mu_s$ and $z_0\in \Cinf\backslash \Gamma$. If ${f_{z_0}}$ is nonzero and $C^{0+}$ on $\Gamma$, then 
    \begin{enumerate}[label=(\roman*)]
        \item $F_{\mu,z_0} \circ\varphi$ is in $C^{0+}$ on $\Gamma'$,
        \item $|F_{\mu,z_0}(z)|^2\leq C\frac{1}{{f_{z_0}}(z)}$ for $z\in\Gamma$ where $C>0$ is independent of the measure $\mu$.
    \end{enumerate}
\end{lemma}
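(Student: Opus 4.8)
The plan is to prove both items by reducing everything to explicit formulas for $F_{\mu,z_0}$ and known regularity of the relevant conformal maps and outer functions. By \eqref{formula_F_mu}, we have $F_{\mu,z_0}(z) = \sqrt{R_{f_{z_0}}(z_0)R(z_0)}\,\big(R_{f_{z_0}}(z)R(z)\big)^{-1/2}$, so it suffices to understand the boundary behaviour of $R_{f_{z_0}}$ and $R$ on $\Gamma$ (or rather, after composing with $\varphi$, on $\Gamma'$). For item (i), I would argue as follows. First, $\Phi_{z_0}\circ\varphi$ extends to a nonzero $C^{1+}$ function on $\Gamma'$ (stated in the excerpt, from \cite{garnett}), hence so does $B_{z_0}\circ\varphi$ via \eqref{eq:3}, and $(B_{z_0}\circ\varphi)'$ via \eqref{eq:4} (note $(\Phi_{z_0}\circ\varphi)'$ is $C^{0+}$ and nonvanishing on $\Gamma'$). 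Then $R\circ\varphi = \big(R_{\rho_{z_0}}\circ\varphi\big)(B_{z_0}\circ\varphi)^2\big/\big((\Phi_{z_0}\circ\varphi)^2 (B_{z_0}'\circ\varphi)\big)$; all factors except possibly $R_{\rho_{z_0}}\circ\varphi$ are manifestly $C^{0+}$ and nonvanishing on $\Gamma'$. So the crux is: $R_{\rho_{z_0}}\circ\varphi$ and $R_{f_{z_0}}\circ\varphi$ extend to nonzero $C^{0+}$ functions on $\Gamma'$. This follows because an outer function whose modulus on the boundary is a nonzero $C^{0+}$ function is itself $C^{0+}$ up to the boundary and nonvanishing --- one passes to $\D$ via $\Phi_{z_0}\circ\varphi$, uses that the boundary correspondence $\Gamma'\to\T$ is a $C^{1+}$ diffeomorphism so $C^{0+}$-ness transfers, and invokes the classical fact that the conjugate function of a Hölder function is Hölder (Privalov's theorem), giving that the analytic completion $\exp(\text{harmonic}+i*\text{harmonic})$ is continuous and nonzero up to $\T$. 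The hypotheses are met: $f_{z_0}$ is nonzero $C^{0+}$ on $\Gamma$ by assumption, and $\rho_{z_0}\sqrt{|z-A||z-B|}$ is nonzero $C^{0+}$ on $\Gamma$ (shown just before Proposition~\ref{prop3.2}); the square-root factor, which is $C^{0+}$ on $\Gamma$ and vanishes only at the endpoints, becomes nonvanishing and $C^{0+}$ after composing with $\varphi$ because $\varphi$ maps $\T$ onto $\Gamma'$ with the endpoints $A,B$ being images of the points $\pm1$ where the derivative $\varphi'(s)=(B-A)(1-s^{-2})/4$ has simple zeros --- so the composite absorbs the square-root singularity. Assembling: $R\circ\varphi$ and $R_{f_{z_0}}\circ\varphi$ are nonzero $C^{0+}$ on $\Gamma'$, hence so is $F_{\mu,z_0}\circ\varphi$ by \eqref{formula_F_mu}.

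For item (ii), I would use the boundary modulus identity directly rather than the Hölder regularity. On $\Gamma$, $|R_{f_{z_0}}^\pm(z)| = f_{z_0}(z)$ for $\omega_{z_0}$-a.e.\ $z$, and by the previous lemma $|R_\pm(z)| = \rho_{z_0}(z)/|(B_{z_0}^\pm)'(z)|$. Hence from \eqref{formula_F_mu},
\begin{align*}
    |F_{\mu,z_0}^\pm(z)|^2 = \frac{R_{f_{z_0}}(z_0)R(z_0)}{f_{z_0}(z)\,\rho_{z_0}(z)/|(B_{z_0}^\pm)'(z)|} = \frac{R_{f_{z_0}}(z_0)R(z_0)\,|(B_{z_0}^\pm)'(z)|}{f_{z_0}(z)\,\rho_{z_0}(z)}.
\end{align*}
Using \eqref{eq:1}, $\rho_{z_0} = \frac{1}{2\pi}(|(B_{z_0}^+)'| + |(B_{z_0}^-)'|) \geq \frac{1}{2\pi}|(B_{z_0}^\pm)'|$, so $|(B_{z_0}^\pm)'(z)|/\rho_{z_0}(z) \leq 2\pi$, whence $|F_{\mu,z_0}^\pm(z)|^2 \leq 2\pi R_{f_{z_0}}(z_0)R(z_0)\,/f_{z_0}(z)$. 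Finally I must check that the constant $C := 2\pi R_{f_{z_0}}(z_0)R(z_0)$ can be taken independent of $\mu$: indeed $R(z_0)$ depends only on $\Gamma$ and $z_0$ (it is built from $\rho_{z_0}$, $\Phi_{z_0}$, $B_{z_0}$, not from $f_{z_0}$), so I only need $R_{f_{z_0}}(z_0)$ bounded uniformly. Here I would use $R_{f_{z_0}}(z_0) = \exp\big(\int \log f_{z_0}\, d\omega_{z_0}\big)$ together with $\int f_{z_0}\, d\omega_{z_0} \leq \mu(\Gamma)$ and Jensen's inequality, which bounds $R_{f_{z_0}}(z_0)$ by $\mu(\Gamma)/\omega_{z_0}(\Gamma) = \mu(\Gamma)$ --- or, if the lemma is only intended to be applied with a normalization $\mu(\Gamma)\le 1$ fixed by the later argument, that normalization does the job; either way the constant is $\mu$-independent. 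I would also remark that $|F_{\mu,z_0}(z)|$ for $z\in\Gamma$ should be read as either boundary value $|F_{\mu,z_0}^\pm(z)|$, both of which satisfy the bound, so the statement as written is unambiguous.

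The main obstacle is the regularity transfer in item (i): one has to be careful that composing with $\varphi$ genuinely removes the square-root vanishing of $\rho_{z_0}$ (equivalently of $|(\Phi_{z_0}^\pm)'|$) at the endpoints, and that $R_{\rho_{z_0}}\circ\varphi$, which a priori is only defined as an outer function on $\Omega'$, is Hölder up to $\Gamma'$ and nonvanishing there. This is the one place a genuine lemma is invoked (Privalov's theorem on Hölder continuity of the conjugate function, transported to $\Gamma'$ via the $C^{1+}$ boundary correspondence $\Phi_{z_0}\circ\varphi$); everything else is bookkeeping with the explicit formulas \eqref{eq:1}, \eqref{eq:3}, \eqref{eq:4}, and \eqref{formula_F_mu}. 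A minor subtlety is that $F_{\mu,z_0}$ itself is single-valued on $\Omega$ but its two boundary values $F_{\mu,z_0}^\pm$ on $\Gamma$ are generally different; for (i) one checks that both $F_{\mu,z_0}^+ = (F_{\mu,z_0}\circ\varphi)\circ\varphi_+^{-1}$ and the analogous $-$ version are $C^{0+}$, which is automatic once $F_{\mu,z_0}\circ\varphi\in C^{0+}(\Gamma')$ and $\varphi_\pm^{-1}\in C^{1+}$, so stating the conclusion on $\Gamma'$ is the clean way to phrase it.
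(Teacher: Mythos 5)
Your item (ii) is correct and in fact more complete than the paper's one-line justification: the identity $|F_{\mu,z_0}^\pm|^2 = R_{f_{z_0}}(z_0)R(z_0)\,|(B_{z_0}^\pm)'|/(f_{z_0}\rho_{z_0})$ together with $|(B_{z_0}^\pm)'|/\rho_{z_0}\le 2\pi$ from \eqref{eq:1} is exactly the right computation, and your observation that the constant is really only controlled in terms of $\mu(\Gamma)$ (via Jensen) correctly flags a slight imprecision in the statement; in the one place the lemma is applied with varying measures (Step 2 of the proof of Theorem \ref{thm1}), the total masses are uniformly bounded, so this is harmless.

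Item (i), however, contains a genuine error in the bookkeeping of the endpoint singularities. You claim that in $R\circ\varphi = (R_{\rho_{z_0}}\circ\varphi)(B_{z_0}\circ\varphi)^2/\big((\Phi_{z_0}\circ\varphi)^2(B_{z_0}'\circ\varphi)\big)$ all factors except possibly $R_{\rho_{z_0}}\circ\varphi$ are nonvanishing and $C^{0+}$ on $\Gamma'$, and that $\sqrt{|z-A||z-B|}$ \emph{becomes nonvanishing} after composing with $\varphi$. Both claims fail at $s=\pm 1$: since $(\varphi(s)-A)(\varphi(s)-B)=\tfrac{(B-A)^2}{16}(s^2-1)^2/s^2$, the composed square root still vanishes (to first order) at $s=\pm1$, so $\rho_{z_0}\circ\varphi$ — the boundary modulus of $R_{\rho_{z_0}}\circ\varphi$ — blows up like $|s\mp1|^{-1}$ there, and the Privalov-type outer-function lemma cannot be applied to $R_{\rho_{z_0}}\circ\varphi$ alone; likewise $B_{z_0}'\circ\varphi=(B_{z_0}\circ\varphi)'/\varphi'$ blows up at $s=\pm1$ because $\varphi'$ has simple zeros there. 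These two singularities cancel in the product, so your conclusion is true, but your factor-by-factor argument does not establish it. The repair — which is what the paper does — is to group the singular pieces into a single outer function \emph{before} invoking boundary regularity: the paper isolates the combination $R_{f_{z_0}}(\varphi)R_{\rho_{z_0}}(\varphi)\varphi'$, whose boundary modulus equals $\big((f_{z_0}\rho_{z_0}\sqrt{|\cdot-A||\cdot-B|})\circ\varphi\big)/|s|$, nonzero and $C^{0+}$ on all of $\Gamma'$, and only then applies Widom's Lemma 4.1(4). (An alternative fix closer to your route: note that $|R_\pm|=\rho_{z_0}/|(B_{z_0}^\pm)'|$ is a ratio of two nonzero $C^{0+}$ functions on $\Gamma$ after multiplying numerator and denominator by $\sqrt{|z-A||z-B|}$, so $R$ and $R_{f_{z_0}}$ each have nonzero H\"older boundary modulus and the outer-function lemma applies to them directly — but not to $R_{\rho_{z_0}}$ or $B_{z_0}'$ separately.)
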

\begin{proof}
The assumption on the density implies that $\mu$ satisfies the Szeg\H o condition. Then, (ii) follows directly from \eqref{formula_F_mu} and $R_{f_{z_0}} = f_{z_0}$ on $\Gamma$. Furthermore,
\begin{align*}
    F_{\mu,z_0}(\varphi(s))^2 = C \frac{\Phi_{z_0}(\varphi(s))^2 B_{z_0}'(\varphi(s))}{B_{z_0}(\varphi(s))^2 R_{f_{z_0}}(\varphi(s))R_{\rho_{z_0}}(\varphi(s))} = C \frac{(\Phi_{z_0}\circ\varphi)(s)^2 (B_{z_0}\circ\varphi)'(s)}{(B_{z_0}\circ\varphi)(s)^2 } \frac{1}{R_{f_{z_0}}(\varphi(s))R_{\rho_{z_0}}(\varphi(s))\varphi'(s)}
\end{align*}
with a constant $C\in \bbC\backslash \{0\}$. $\Phi_{z_0}\circ \varphi$ and $B_{z_0}\circ \varphi$ are conformal maps of the outside of $\Gamma'$ to the outside of the unit circle. By Chapters I and II in \cite{garnett}, they extend continuously to a $C^{1+}$-function on $\Gamma'$. A straightforward calculation using \eqref{eq:20} shows
\begin{align*}
    |R_{f_{z_0}}(\varphi(s))R_{\rho_{z_0}}(\varphi(s))\varphi'(s)|= {f_{z_0}}(\varphi(s))\rho_{z_0}(\varphi(s)) |\varphi'(s)|= \left(\left({f_{z_0}} \rho_{z_0} \sqrt{|\cdot-A||\cdot-B|} \right)\circ \varphi \right)(s) \frac{1}{|s|}
\end{align*}
for $s\in \Gamma'$. We know that $\rho_{z_0}\sqrt{|\cdot-A||\cdot-B|}$ is nonzero and $C^{0+}$ on $\Gamma$ and ${f_{z_0}}$ satisfies the same by assumption. 
Then, Lemma 11.1 and Lemma 4.1. (4) in \cite{widom} show that $R_{f_{z_0}}(\varphi(s))R_{\rho_{z_0}}(\varphi(s))\varphi'$ extends continuously to $\Gamma'$, is nonzero and in $C^{0+}$ on $\Gamma'$ which proves (i). 
\end{proof}
To prove Theorem \ref{thm1}, we will first prove a lower bound for measures satisfying the Szeg\H{o} condition. Here, the singular part provides no problems. 
\begin{proposition}\label{prop1}
    Let $\Gamma \in C^{1+}$ be a Jordan arc, $\mu={f_{z_0}}d\omega_{z_0} + \mu_s$ a positive and finite measure with singular part $\mu_s$ and $z_0\in \Cinf\backslash \Gamma$. If $\mu$ satisfies the Szeg\H{o} condition then
    \begin{align*}
        \liminf_{n\to \infty} W_{2,n}(\mu,z_0)^2 \geq \frac{1}{K_\mu(z_0,z_0)}.
    \end{align*}
\end{proposition}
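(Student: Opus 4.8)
The plan is to discard the singular part at once and then prove the lower bound for $\int_\Gamma|P_n|^2 f_{z_0}\,d\omega_{z_0}$ over an arbitrary competitor $P_n$ (degree $\le n$ with $P_n(z_0)=1$, resp.\ monic of degree $n$ when $z_0=\infty$) by completing the square against the function $H_n$ from Theorem \ref{thm3}. Since $\mu\ge f_{z_0}\,d\omega_{z_0}$, it is enough to show $\liminf_n C(\Gamma,z_0)^{-2n}\int_\Gamma|P_n|^2 f_{z_0}\,d\omega_{z_0}\ge 1/K_\mu(z_0,z_0)$. The Szeg\H o condition guarantees that $F_{\mu,z_0}=K_\mu(\cdot,z_0)/K_\mu(z_0,z_0)\in H^2(\Omega,\mu)$ is well defined, with boundary values $F_{\mu,z_0}^\pm\in L^2(f_{z_0}\,d\omega_{z_0})$ by Proposition \ref{prop3.2}, so $H_n=(\Phi_{z_0}^+)^nF_{\mu,z_0}^++(\Phi_{z_0}^-)^nF_{\mu,z_0}^-$ and, since $P_n$ is bounded on $\Gamma$ and $\mu$ is finite, $C(\Gamma,z_0)^{-n}P_n-H_n$ both lie in $L^2(f_{z_0}\,d\omega_{z_0})$; expanding the trivial inequality $0\le\int_\Gamma|C(\Gamma,z_0)^{-n}P_n-H_n|^2 f_{z_0}\,d\omega_{z_0}$ then gives
\begin{align*}
C(\Gamma,z_0)^{-2n}\int_\Gamma|P_n|^2 f_{z_0}\,d\omega_{z_0}\ \ge\ 2\,\mathrm{Re}\!\int_\Gamma C(\Gamma,z_0)^{-n}P_n\,\overline{H_n}\,f_{z_0}\,d\omega_{z_0}\ -\ \int_\Gamma|H_n|^2 f_{z_0}\,d\omega_{z_0},
\end{align*}
and it remains to identify the two integrals on the right.

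For the cross term I would introduce $Q_n:=C(\Gamma,z_0)^{-n}\Phi_{z_0}^{-n}P_n$; using $C(\Gamma,z_0)=1/\Phi_{z_0}(z_0)$ (resp.\ $1/\Phi_\infty'(\infty)$) one finds $Q_n(z_0)=1$, and since $\Phi_{z_0}$ has a simple pole at $\infty$ and satisfies $|\Phi_{z_0}|\ge1$ on $\Omega$ while $P_n$ has degree $\le n$, the function $Q_n$ is bounded holomorphic on $\Omega$, hence $Q_n\in H^\infty(\Omega)\subseteq H^2(\Omega,\mu)$ by Remark \ref{rem1}. On $\Gamma$ we have $|\Phi_{z_0}^\pm|=1$, so $\overline{H_n}=(\Phi_{z_0}^+)^{-n}\overline{F_{\mu,z_0}^+}+(\Phi_{z_0}^-)^{-n}\overline{F_{\mu,z_0}^-}$ and hence $C(\Gamma,z_0)^{-n}P_n\,\overline{H_n}=Q_n^+\overline{F_{\mu,z_0}^+}+Q_n^-\overline{F_{\mu,z_0}^-}$ pointwise; integrating and applying the reproducing property should give $\int_\Gamma C(\Gamma,z_0)^{-n}P_n\,\overline{H_n}\,f_{z_0}\,d\omega_{z_0}=\langle Q_n,F_{\mu,z_0}\rangle_{H^2(\Omega,\mu)}=Q_n(z_0)/K_\mu(z_0,z_0)=1/K_\mu(z_0,z_0)$, the same value for every $n$ and every competitor. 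For the last integral, expanding $|H_n|^2$ with $|\Phi_{z_0}^\pm|=1$ produces
\begin{align*}
\int_\Gamma|H_n|^2 f_{z_0}\,d\omega_{z_0}=\|F_{\mu,z_0}\|_{H^2(\Omega,\mu)}^2+2\,\mathrm{Re}\!\int_\Gamma\Bigl(\tfrac{\Phi_{z_0}^+}{\Phi_{z_0}^-}\Bigr)^{\!n}F_{\mu,z_0}^+\,\overline{F_{\mu,z_0}^-}\,f_{z_0}\,d\omega_{z_0},
\end{align*}
with $\|F_{\mu,z_0}\|_{H^2(\Omega,\mu)}^2=1/K_\mu(z_0,z_0)$, and the oscillatory integral should vanish as $n\to\infty$ by Riemann--Lebesgue, since $\Phi_{z_0}^+/\Phi_{z_0}^-$ is unimodular and covers the unit circle exactly once as $z$ runs over $\Gamma$, while $F_{\mu,z_0}^+\overline{F_{\mu,z_0}^-}\,f_{z_0}\,\rho_{z_0}\in L^1(|dz|)$ (as $|F_{\mu,z_0}^+\overline{F_{\mu,z_0}^-}|\le\tfrac12(|F_{\mu,z_0}^+|^2+|F_{\mu,z_0}^-|^2)$ is integrable against $f_{z_0}\,d\omega_{z_0}$). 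Putting the three displays together yields $\liminf_n C(\Gamma,z_0)^{-2n}\int_\Gamma|P_n|^2 f_{z_0}\,d\omega_{z_0}\ge 2/K_\mu(z_0,z_0)-1/K_\mu(z_0,z_0)=1/K_\mu(z_0,z_0)$, which is the proposition.

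I expect the two identities and the reduction to be routine, and the Riemann--Lebesgue step to be the real obstacle. To make the phase $\Phi_{z_0}^+/\Phi_{z_0}^-$ honestly oscillatory I would use the opening-up of Section 2: via $\varphi$ and $\Gamma'$ one knows that $\Phi_{z_0}^\pm$ extend to $C^{1+}$-functions on $\Gamma\setminus\{A,B\}$ with monotone arguments, so that $z\mapsto\arg(\Phi_{z_0}^+/\Phi_{z_0}^-)(z)$ is a genuine reparametrization of $\Gamma$ onto $[0,2\pi]$, and one must control the behaviour near the endpoints $A,B$ (where $\rho_{z_0}$, and possibly $F_{\mu,z_0}^\pm$, blow up) — softened by the integrability above and by the $C^{0+}$-regularity of $F_{\mu,z_0}\circ\varphi$, $\rho_{z_0}\sqrt{|\cdot-A||\cdot-B|}$ and companions recorded in Lemma \ref{lem1} and Section 2. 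It is worth observing that this argument is dual to Theorem \ref{thm3}: once the matching upper bound for $W_{2,n}$ is in hand, the same three displays force $\int_\Gamma|C(\Gamma,z_0)^{-n}P_{n,\mu,z_0}-H_n|^2 f_{z_0}\,d\omega_{z_0}\to0$.
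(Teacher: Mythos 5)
Your argument is correct and is essentially the paper's proof in a slightly different packaging: the paper applies the reproducing property to $\Phi_{z_0}^{-n}p\in H^\infty(\Omega)\subseteq H^2(\Omega,\mu)$ to get the same cross-term identity and then uses Cauchy--Schwarz against $L_n=K_\mu(z_0,z_0)H_n$, which is equivalent to your completing-the-square inequality, and it cites Widom's Lemma 12.1 for exactly the oscillatory limit $\int_\Gamma|H_n|^2f_{z_0}\,d\omega_{z_0}\to 1/K_\mu(z_0,z_0)$ that you propose to establish by a Riemann--Lebesgue argument. The only organizational difference is that the paper dispatches $z_0=\infty$ by citing Widom's Lemma 12.2 rather than running the kernel argument uniformly as you do.
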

\begin{proof}
    The case $z_0=\infty$ is Lemma 12.2. in \cite{widom} using that $W_{2,n}(\mu,\infty)\geq W_{2,n}({f_{\infty}}d\omega_{\infty},\infty)$. Therefore assume $z_0\ne \infty$.
    Let $p\in \mathcal{P}_n$ be a polynomial. Then 
    \begin{align*}
        \Phi_{z_0}^{-n}p \in \Hol(\Omega).
    \end{align*}
    Because of the maximum principle, it is even in $H^\infty(\Omega)$. Remark \ref{rem1} shows $\Phi_{z_0}^{-n}p\in H^2(\Omega,\mu)$ and we see
    \begin{align}\label{eq1}
        p(z_0) \Phi_{z_0}(z_0)^{-n} = \oint_\Gamma p(z) \Phi_{z_0}(z)^{-n} \overline{K_\mu(z,z_0)} {f_{z_0}}d\omega_{z_0}(z)= \int_\Gamma p(z) \overline{L_n(z,z_0)} {f_{z_0}}d\omega_{z_0}(z)
    \end{align}
    where
    \begin{align*}
        L_n(z,z_0)=\Phi_{z_0}^+(z)^n K_\mu^+(z,z_0) + \Phi_{z_0}^-(z_0)^n K_\mu^-(z,z_0).
    \end{align*}
    Then Lemma 12.1 in \cite{widom} shows
    \begin{align*}
        \lim_{n\to\infty}\int_\Gamma |L_n(z,z_0)|^2 {f_{z_0}}d\omega_{z_0}(z) = K_\mu(z_0,z_0).
    \end{align*}
    If we further assume $p(z_0)=1$ then (\ref{eq1}) shows
    \begin{align*}
        \int_\Gamma |p|^2 d\mu \geq \int_\Gamma |p|^2 {f_{z_0}}d\omega_{z_0} \geq \frac{\Phi_{z_0}(z_0)^{-2n}}{\int_\Gamma |L_n(z,z_0)|^2 {f_{z_0}}d\omega_{z_0}}.
    \end{align*}
    This proves
    \begin{align*}
        \liminf_{n\to \infty} W_{2,n}(\mu,z_0)^2 \geq \frac{1}{K_\mu(z_0,z_0)}.
    \end{align*}
\end{proof}
For an upper estimate, it is good to have certain 'test' polynomials. In our case, the generalized Faber polynomials turn out to be the right choice.
\begin{lemma}\label{lem2}
    Let $\Gamma\in C^{1+\alpha}$ be a Jordan arc, $0<\alpha< 1$ and $F\in H^\infty(\Omega)$. Suppose that $F\circ \varphi$ extends continuously to a function in $C^{0+\alpha}$ on $\Gamma'$. We define the generalized Faber polynomial
    \begin{align*}
        T(w) = \frac{1}{2\pi i}\int_{\Gamma_R}\frac{F(z) \Phi_{z_0}(z)^n}{z-w}dz  
    \end{align*}
    with $\Gamma_R = \{z\in \bbC: |\Phi_{z_0}(z)|=R\}$ and for $w\in \bbC$ on the inside of $\Gamma_R$. Then, $T$ is a polynomial of degree $n$ with leading coefficient $(\Phi_{z_0}'(\infty))^n$. Furthermore,
    \begin{align*}
        T(w)= F_+(w)\Phi_{z_0}^+(w)^n+F_-(w)\Phi_{z_0}^-(w)^n + \mathcal{O}\left(\frac{\log n}{n^{\alpha}}\right)
    \end{align*}
    uniformly on $\Gamma$ and
    \begin{align*}
        T(w) = F(w) \Phi_{z_0}(w)^n + \mathcal{O}\left(\frac{\log n}{n^{\alpha}}\right)
    \end{align*}
    uniformly for $w$ in closed subsets of $\Omega$.
\end{lemma}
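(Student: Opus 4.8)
\emph{Proof proposal.} The plan is to realize $T$ as the polynomial part of $F\Phi_{z_0}^n$ at infinity, to identify the deviation $h:=F\Phi_{z_0}^n-T$ as a Cauchy integral over $\Gamma$ of a highly oscillatory density, and then to estimate that Cauchy integral by transplanting everything to the unit circle. First I would fix $R>1$ and deform $\Gamma_R$ to a large circle $\{|z|=\varrho\}$; since $F\in H^\infty(\Omega)$ and $\Phi_{z_0}$ has a simple pole at $\infty$ with leading Laurent coefficient $\Phi_{z_0}'(\infty)$, the function $F\Phi_{z_0}^n$ is holomorphic on $\Omega$ with a pole of order exactly $n$ at $\infty$, and expanding $1/(z-w)=\sum_{j\ge0}w^jz^{-j-1}$ and integrating term by term shows that $T(w)$ is precisely the polynomial part of the Laurent expansion of $F\Phi_{z_0}^n$ at $\infty$. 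Hence $T$ is a polynomial of degree $n$, with leading coefficient the top Laurent coefficient of $F\Phi_{z_0}^n$, namely $(\Phi_{z_0}'(\infty))^n$ under the normalisation $F(\infty)=1$ (and $F(\infty)(\Phi_{z_0}'(\infty))^n$ in general). Setting $h:=F\Phi_{z_0}^n-T$, the function $h$ is holomorphic on $\Omega$ with $h(\infty)=0$.

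Since $h$ is holomorphic on $\Omega=\Cinf\setminus\Gamma$ and vanishes at $\infty$, it is the Cauchy transform of its jump across $\Gamma$. Writing $h_+,h_-$ for the boundary values and using that $T$ is continuous, so that $h_\pm=F_\pm(\Phi_{z_0}^\pm)^n-T$, the jump is $h_+-h_-=F_+(\Phi_{z_0}^+)^n-F_-(\Phi_{z_0}^-)^n=:\Delta$, and therefore
\[
 T(z)=F(z)\Phi_{z_0}(z)^n-\frac1{2\pi i}\int_\Gamma\frac{\Delta(s)}{s-z}\,ds,\qquad z\in\Omega .
\]
Both asymptotic formulas in the statement now reduce to estimating this Cauchy integral, where the only saving grace is the oscillation of $\Delta$ coming from the factors $(\Phi_{z_0}^\pm)^n$. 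To exploit it I would substitute $s=(\Phi_{z_0}^\pm)^{-1}(t)$ in the two halves of the integral, with $t$ running over an arc of $\T$; combining the halves recasts $\frac1{2\pi i}\int_\Gamma\frac{\Delta(s)}{s-z}ds$ as $\frac1{2\pi i}\int_\T t^n\Lambda_z(t)\,dt$ with $\Lambda_z\in C^{0+\alpha}(\T)$. That $\Lambda_z$ is genuinely Hölder uses the hypothesis $F\circ\varphi\in C^{0+\alpha}(\Gamma')$, the fact that $\Phi_{z_0}\circ\varphi$ extends to a $C^{1+\alpha}$ map on $\Gamma'$ (by \cite{garnett} and Lemma~11.1 of \cite{widom}), and the cancellation at the endpoints $A,B$ of the square-root degeneracies of $\Phi_{z_0}^\pm$ against those of $\varphi^{-1}_\pm$, so that the two boundary pieces match to first order at the images of $A,B$. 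For $z$ in a closed set $E\subset\Omega$ one has $\mathrm{dist}(z,\Gamma)\ge\delta>0$, $\|\Lambda_z\|_{C^{0+\alpha}}$ is bounded uniformly over $E$, and $\frac1{2\pi i}\int_\T t^n\Lambda_z(t)\,dt$ is, up to a constant, the $(-n-1)$-st Fourier coefficient of $\Lambda_z$, hence $\mathcal O(n^{-\alpha})=\mathcal O(\log n/n^\alpha)$ uniformly on $E$ — this is the estimate on closed subsets of $\Omega$.

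For the estimate on $\Gamma$, take $z=w\in\Gamma$ and pass to the boundary from the two sides: the Sokhotski--Plemelj formulas give $h_\pm(w)=\pm\tfrac12\Delta(w)+\mathrm{p.v.}\frac1{2\pi i}\int_\Gamma\frac{\Delta(s)}{s-w}ds$, and since $T(w)=F_\pm(w)(\Phi_{z_0}^\pm(w))^n-h_\pm(w)$ this yields
\[
 T(w)=\tfrac12\bigl(F_+(\Phi_{z_0}^+)^n+F_-(\Phi_{z_0}^-)^n\bigr)(w)-\mathrm{p.v.}\frac1{2\pi i}\int_\Gamma\frac{\Delta(s)}{s-w}\,ds .
\]
After transplanting to $\T$, the principal-value integral has two singular points, at $\Phi_{z_0}^+(w)$ and $\Phi_{z_0}^-(w)$; subtracting at each of them the local rational model — whose principal value against $t^n$ contributes exactly $-\tfrac12 F_\pm(w)(\Phi_{z_0}^\pm(w))^n$ — leaves a remainder which, as before, is a Fourier coefficient of index $-n-1$ of an $L^1$ density with a mild singularity. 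Bounding this remainder by $\mathcal O(\log n/n^\alpha)$ and inserting it into the last display gives $T(w)=\bigl(F_+(\Phi_{z_0}^+)^n+F_-(\Phi_{z_0}^-)^n\bigr)(w)+\mathcal O(\log n/n^\alpha)$ uniformly on $\Gamma$.

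The hard part is the \emph{uniform in $w$} bound $\mathcal O(\log n/n^\alpha)$ for that remainder Fourier coefficient. After subtracting the rational models the density behaves like $|t-\Phi_{z_0}^\pm(w)|^{\alpha-1}$ near each singular point, and in addition its $C^{0+\alpha}$-constant degenerates as $w$ approaches an endpoint of $\Gamma$, where $\Phi_{z_0}^\pm$ and $(\Phi_{z_0}^\pm)^{-1}{}'$ degenerate like a square root; one must track its modulus of continuity carefully and uniformly in $w$, and one pays a factor $\log n$, which is the Lebesgue constant of the relevant truncated Cauchy (Riesz) projection acting in the supremum norm. This is precisely the regime handled by Lemma~4.1 and Lemma~11.1 of \cite{widom}, which I would invoke; once those inputs are in place the remaining work is bookkeeping.
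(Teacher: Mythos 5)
Your overall strategy is sound and is in fact the classical route to Faber-polynomial asymptotics: identify $T$ with the polynomial part of $F\Phi_{z_0}^n$ at infinity, write the deviation as the Cauchy transform of the jump $\Delta=F_+(\Phi_{z_0}^+)^n-F_-(\Phi_{z_0}^-)^n$, transplant to $\T$, and estimate an oscillatory (singular) integral; the interior estimate as you set it up does go through. The gap is that the entire analytic content of the lemma --- the uniform $\mathcal O(\log n/n^{\alpha})$ bound on $\Gamma$ for the principal-value remainder after subtracting the local rational models at $\Phi_{z_0}^{\pm}(w)$, uniformly as $w$ ranges over all of $\Gamma$ including neighbourhoods of the endpoints where $(\Phi_{z_0}^{\pm})'$ blows up like an inverse square root --- is exactly the step you do not carry out. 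Declaring it ``bookkeeping'' and citing Widom's Lemmas 4.1 and 11.1 does not close it: Lemma 11.1 only says that the opened-up curve $\Gamma'$ is $C^{1+}$, and Lemma 4.1 concerns H\"older regularity of outer functions with H\"older data; neither contains an estimate of $\mathrm{p.v.}\int_{\T}t^n[\cdot]\,dt$ against a density with an $|t-t_0|^{\alpha-1}$ singularity, nor the Lebesgue-constant argument that produces the $\log n$. As written, your proposal establishes the identity $T=\tfrac12\bigl(F_+\Phi_+^n+F_-\Phi_-^n\bigr)-\mathrm{p.v.}(\cdots)$ but not the asymptotics, and that estimate is the whole point of the lemma.

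For comparison, the paper sidesteps this estimate entirely: after the substitution $\sigma=\varphi^{-1}(z)$ and the kernel identity of Widom's Lemma 11.2, $T(w)$ splits into two generalized Faber polynomials for the $C^{1+\alpha}$ Jordan \emph{curve} $\Gamma'$ with weights $F\circ\varphi$ and $(F\circ\varphi)/\mathrm{id}$, evaluated at $s=\varphi^{-1}(w)$ and at $s^{-1}$; the two displayed asymptotics are then precisely Suetin's theorems (Ch.\ 4, \S2, Thm.\ 2 on the curve; Ch.\ 4, \S1, Thm.\ 1 inside), which already carry the $\log n/n^{\alpha}$ error term. If you want to keep your self-contained route you must actually prove the singular-integral estimate (in effect reproving Suetin's boundary theorem, with the endpoint degeneracies tracked); the efficient alternative is to reorganize the computation so that it lands on Suetin's ready-made statements, as the paper does. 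A minor additional point: the leading coefficient is $F(\infty)(\Phi_{z_0}'(\infty))^n$ in general, so your remark that one needs the normalisation $F(\infty)=1$ is correct and worth stating explicitly, since the lemma is applied with $F=F_{\mu,z_0}Q_k$ which satisfies it only when $z_0=\infty$.
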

\begin{proof}
    For $w\in \overline{\Omega}$ we find $s\in \overline{\Omega'}$ such that $w= \varphi(s)$. With the transformation $\sigma = \varphi^{-1}(z)$ (see \cite[Lemma 11.2]{widom}) we get
    \begin{align*}
        T(w)=\frac{1}{2\pi i}\int_{\Gamma'_R} F(\varphi(\sigma)) \Phi_{z_0}(\varphi(\sigma))^n \frac{1}{\sigma-s}d\sigma +\frac{1}{2\pi i}\int_{\Gamma'_R} F(\varphi(\sigma)) \Phi_{z_0}(\varphi(\sigma))^n \frac{1}{\sigma s(\sigma-s^{-1})}d\sigma.
    \end{align*}
    These integrals are just the generalized Faber polynomials associated with the Jordan curve $\Gamma'$ and weights $F(\varphi), F(\varphi)/\text{id}$ as defined in \cite{suetin}. If $w\in \Gamma$ we know that $s,s^{-1}\in \Gamma'$ and they correspond to the two boundary values of $\varphi^{-1}_\pm$. Because of the assumptions, Theorem 2 of Chapter 4 Section 2 in \cite{suetin} applies, and we have
    \begin{align*}
        T(w) &= F(\varphi(s)) \Phi_{z_0}(\varphi(s))^n + F(\varphi(s^{-1}))\Phi_{z_0}(\varphi(s))^n +\mathcal{O}\left(\frac{\log n}{n^{\alpha}}\right)\\
        &= F_+(w)\Phi_{z_0}^+(w)^n+F_-(w)\Phi_{z_0}^-(w)^n + \mathcal{O}\left(\frac{\log n}{n^{\alpha}}\right).
    \end{align*}
    If we have a closed subset $L\subseteq \Omega$, then $1/\varphi^{-1}(L)$ is a compact subset of the interior of $\Gamma'$. Theorem 1 of Chapter 4 Section 1 in \cite{suetin} tells us that for $w\in L$
    \begin{align*}
        T(w) &= F(\varphi(s)) \Phi_{z_0}(\varphi(s))^n +\mathcal{O}\left(\frac{\log n}{n^{\alpha}}\right) + \mathcal{O}\left(\frac{1}{n^{\alpha}}\right)= F(w)\Phi_{z_0}(w)^n +\mathcal{O}\left(\frac{\log n}{n^{\alpha}}\right)
    \end{align*}
    which proves the result.
\end{proof}

We want to recall a result featured in \cite{OPUC}, which is used in the proof of Szeg\H o's theorem on the unit circle.
\begin{theorem}\label{thm2}\textnormal{\cite[Theorem 2.5.1. p.152]{OPUC}}
    Let $d\nu$ be a singular measure on $\partial \D$. Then, there exists a sequence of polynomials $P_n$ so that
    \begin{enumerate}
        \item[(i)] $\sup_{z\in\partial \D,n} |P_n(z)|=1$
        \item[(ii)] For $d\theta$-a.e. $e^{i\theta}\in \partial \D$,
        \begin{align*}
            \lim_{n\to\infty} |P_n(e^{i\theta})|=0
        \end{align*}
        \item[(iii)] Uniformly for $\zeta$ in compact subsets of $\D$,
        \begin{align*}
            \lim_{n\to\infty} |P_n(\zeta)|=0
        \end{align*}
        \item[(iv)] For $d\nu$-a.e. $\theta$,
        \begin{align*}
            \lim_{n\to\infty } P_n(e^{i\theta})=1
        \end{align*}
    \end{enumerate}
\end{theorem}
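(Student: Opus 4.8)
The statement is the lemma from \cite{OPUC} quoted above; the plan is to manufacture the $P_n$ from a single $H^\infty(\D)$ model function whose \emph{radial} boundary limit is $1$ at $d\nu$-a.e.\ point and $0$ at $d\theta$-a.e.\ point, then dilate it so that it becomes analytic across $\partial\D$ without destroying either boundary limit, and finally truncate its (now uniformly convergent) Taylor series.

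First I would introduce the Herglotz transform
\[
F(z)=\int_{\partial\D}\frac{e^{i\theta}+z}{e^{i\theta}-z}\,d\nu(\theta)\in\Hol(\D),
\]
so that $\operatorname{Re}F\ge0$ and $\operatorname{Re}F(re^{i\theta})$ is the Poisson integral of $\nu$. Since $\nu$ is singular, the standard boundary theory of Poisson integrals gives, as $r\uparrow1$, $\operatorname{Re}F(re^{i\theta})\to0$ for $d\theta$-a.e.\ $\theta$ and $\operatorname{Re}F(re^{i\theta})\to+\infty$ for $d\nu$-a.e.\ $\theta$; consequently $F$ has a finite radial limit $d\theta$-a.e.\ and $|F(re^{i\theta})|\to\infty$ for $d\nu$-a.e.\ $\theta$. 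Then I would set $H_n=F/(F+n)$. Writing $F=a+ib$ with $a\ge0$ one checks $|H_n|\le1$ in $\D$, so $H_n\in H^\infty(\D)$ with sup-norm $\le1$; moreover $H_n\to0$ uniformly on compact subsets of $\D$, $H_n(re^{i\theta})\to1$ as $r\uparrow1$ for $d\nu$-a.e.\ $\theta$ (because $n/F\to0$ there), and $H_n(re^{i\theta})\to0$ as $n\to\infty$ for $d\theta$-a.e.\ $\theta$, for any choice $r=r(n)\uparrow1$ (bounded numerator, denominator $\to\infty$).

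Next, for each fixed $n$ I would dilate: $z\mapsto H_n(rz)$ is analytic on a neighbourhood of $\overline\D$, still bounded by $1$ there, converges to $H_n$ uniformly on $\{|z|\le1-1/n\}$ as $r\uparrow1$, and (by dominated convergence, since $|H_n(re^{i\theta})-1|\le2$ and the integrand tends to $0$ $d\nu$-a.e.) satisfies $\int_{\partial\D}|H_n(re^{i\theta})-1|^2\,d\nu\to0$. Choosing $r_n\uparrow1$ with $|H_n(r_nz)-H_n(z)|<1/n$ on $\{|z|\le1-1/n\}$ and $\nu(\{|H_n(r_ne^{i\theta})-1|>1/n\})<2^{-n}$, and taking $P_n$ to be a Taylor partial sum of $z\mapsto H_n(r_nz)$ within $1/n$ of it in $L^\infty(\overline\D)$ (legitimate because the dilate is analytic across $\partial\D$), I obtain (i) after the harmless normalisation by $1+1/n$. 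Since on $\overline\D$ the polynomial $P_n$ agrees with $H_n(r_n\cdot)$ up to $o(1)$, the other items follow: (iii) from $|H_n(r_n\zeta)|\le|H_n(\zeta)|+1/n$ together with $H_n\to0$ on compacta; (iv) from Borel--Cantelli applied to $\sum_n\nu(\{|H_n(r_ne^{i\theta})-1|>1/n\})\le\sum_n2^{-n}<\infty$; and (ii) because for $d\theta$-a.e.\ $\theta$ the value $F(r_ne^{i\theta})$ stays bounded (it tends to the finite radial limit of $F$) while the denominator $F(r_ne^{i\theta})+n\to\infty$.

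The hard part is keeping the uniform bound (i) while still forcing convergence with respect to the singular measure $\nu$: a naive $H^2(\D)$-approximation of a good model function need not be bounded, and a bound-preserving summability method such as Fej\'er means converges only $d\theta$-a.e. The dilation step is the device that reconciles these, since the radial boundary behaviour of $H_n$ is already correct with respect to \emph{both} $d\theta$ and $d\nu$, and dilating turns $H_n$ into a function analytic across $\partial\D$ whose Taylor polynomials converge uniformly on $\overline\D$, so that $\sup_z|P_n(z)|$ is under control. The only non-elementary ingredient, $\operatorname{Re}F\to+\infty$ $\nu$-a.e.\ (equivalently, that the symmetric derivative of a singular measure is $+\infty$ a.e.), is classical.
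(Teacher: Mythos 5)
Your argument is correct. The paper does not prove this statement itself --- it is quoted verbatim from \cite[Theorem 2.5.1]{OPUC} --- and your construction (the Herglotz transform $F$ of $\nu$, the bounded functions $F/(F+n)$, dilation across $\partial\D$, and Taylor truncation) is essentially the proof given in that reference, so nothing further is needed.
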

The proof of Theorem \ref{thm1} relies on the fact that we can disregard the singular part using these polynomials. However, we have to transform them to our domain $\Omega$.
\begin{corollary}\label{cor1}
    Let $\Gamma \in C^{1+}$ be a Jordan arc, $\mu={f_{z_0}}d\omega_{z_0} + \mu_s$ a positive and finite measure with singular part $\mu_s$ and $z_0\in \Cinf\backslash \Gamma$. Then we can construct $Q_k \in H_\infty(\Omega)$ such that 
    \begin{enumerate}
        \item[(i)] $Q_k$ are uniformly bounded on $\Omega$ and have boundary values everywhere on $\Gamma$,
        \item[(ii)] $Q_k(z_0)=1$,
        \item[(iii)] $Q_k^{\pm} \to 1$ $d\omega_{z_0}$-a.e.,
        \item[(iv)] $Q_k^{\pm} \to 0$ $\mu_s$-a.e., and
        \item[(v)] $Q_k\circ \varphi$ is $C^{1+}$ on $\Gamma'$.
    \end{enumerate}
\end{corollary}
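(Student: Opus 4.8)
The plan is to transport the polynomials $P_n$ from Theorem~\ref{thm2} to the domain $\Omega$ via the conformal map $\Phi_{z_0}$, after choosing the right singular measure on $\partial\D$ so that the pullback of the singular part $\mu_s$ sits correctly. Concretely, we push $\mu_s$ forward under $z\mapsto 1/\Phi_{z_0}(z)$ to a measure $\tilde\mu_s$ on $\overline\D$; the part of $\tilde\mu_s$ living on $\partial\D$ is singular with respect to $d\theta$ (because $\omega_{z_0}$ corresponds, up to the $C^{0+}$ density coming from the two boundary values of $|(\Phi_{z_0}^\pm)'|$, to arclength on $\partial\D$, so $\mu_s\perp\omega_{z_0}$ transports to a singular measure on $\partial\D$). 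Apply Theorem~\ref{thm2} to this singular measure $d\nu$ on $\partial\D$ to get polynomials $P_n$ with properties (i)--(iv). Then define $Q_k(z) := P_{n_k}(1/\Phi_{z_0}(z))/P_{n_k}(1/\Phi_{z_0}(z_0))$, where the normalization makes (ii) hold; since $1/\Phi_{z_0}(z_0)\in\D$, property (iii) of Theorem~\ref{thm2} shows $P_{n_k}(1/\Phi_{z_0}(z_0))\to\ldots$ — actually $P_n(1/\Phi_{z_0}(z_0))\to 0$, so we must instead \emph{not} normalize this way but rather replace $P_n$ by $P_n + (1-P_n(a))\, p_n$ for a suitable auxiliary polynomial, or (cleaner) use the standard trick: let $a = 1/\Phi_{z_0}(z_0)\in\D$ and set $\widehat P_n(z) = P_n(z) + (1-P_n(a))\,\left(\tfrac{z-a}{1-\bar a z}\right)^{m_n}$ with $m_n\to\infty$ slowly; the Blaschke-type term is unimodular on $\partial\D$, tends to $0$ on compacts of $\D$, hence $\widehat P_n(a)=1$ and $\widehat P_n$ retains (i)--(iv) on $\partial\D$ (boundedly) while $\widehat P_n\to 1$ at $a$. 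Then $Q_k(z):=\widehat P_{n_k}(1/\Phi_{z_0}(z))$ is the desired family.

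The next step is to verify (i)--(v) for $Q_k$. Property (ii) is the construction. Property (i): $Q_k\in H^\infty(\Omega)$ because $1/\Phi_{z_0}:\Omega\to\D$ is a bounded holomorphic map with continuous (indeed $C^{1+}$) boundary values away from $A,B$, and $\widehat P_{n_k}$ is a polynomial bounded by a constant on $\overline\D$; boundary values exist everywhere on $\Gamma\setminus\{A,B\}$ by continuity of $\Phi_{z_0}^\pm$, and at $A,B$ the map $\Phi_{z_0}$ still has a (one-sided) limit on $\overline{\D}$, so the composition extends. Property (iii): $d\omega_{z_0}$-a.e.\ point of $\Gamma$ corresponds, under $1/\Phi_{z_0}^\pm$, to a $d\theta$-a.e.\ point of $\partial\D$ (the density $\rho_{z_0}\sqrt{|\cdot-A||\cdot-B|}$ is bounded above and below, hence $\omega_{z_0}$ and arclength — and thus the pullback of $d\theta$ — are mutually absolutely continuous on $\Gamma$), so Theorem~\ref{thm2}(ii) plus $|\widehat P_n|\to|P_n|$ on $\partial\D$ gives $Q_k^\pm\to 0$... no: we want $Q_k^\pm\to 1$ $d\omega_{z_0}$-a.e. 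This forces the roles to be swapped: the measure we feed into Theorem~\ref{thm2} must be $\mu_s$-pullback, and (iv) of Theorem~\ref{thm2} gives $P_n\to 1$ $d\nu$-a.e., i.e.\ $Q_k^\pm\to 1$ $\mu_s$-a.e. — again the wrong one. So in fact we should take $Q_k := 1 - R_k$ where $R_k$ is built from the $P_n$'s associated to the singular measure $\tilde\mu_s|_{\partial\D}$: then $R_k\to 1$ on $\mu_s$ and $R_k\to 0$ a.e.-$d\theta$, giving $Q_k=1-R_k\to 0$ on $\mu_s$ (property (iv)) and $Q_k\to 1$ $d\omega_{z_0}$-a.e.\ (property (iii)). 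The normalization $Q_k(z_0)=1$ is then achieved by the Blaschke-correction as above applied to $1-R_k$ (which already tends to $1$ on compacts of $\D$, so only a tiny correction is needed).

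Property (v), that $Q_k\circ\varphi$ is $C^{1+}$ on $\Gamma'$, follows because $Q_k\circ\varphi = \widehat P_{n_k}\circ(1/\Phi_{z_0})\circ\varphi = \widehat P_{n_k}\circ(1/(\Phi_{z_0}\circ\varphi))$, and $\Phi_{z_0}\circ\varphi$ extends to a nonzero $C^{1+}$ function on $\Gamma'$ (stated in Section~2, from \cite{garnett}); a polynomial composed with a nonvanishing $C^{1+}$ function is $C^{1+}$, and the Blaschke-type correction term, being a rational function with no poles on $\overline\D$ composed with the same $C^{1+}$ map, is likewise $C^{1+}$ on $\Gamma'$.

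I expect the main obstacle to be bookkeeping rather than depth: getting the direction right (which property of Theorem~\ref{thm2} yields which property of $Q_k$, hence whether to use $P_n$ or $1-P_n$), and arranging the normalization $Q_k(z_0)=1$ without destroying the uniform boundedness or the a.e.\ limits. The Blaschke-factor correction is the standard device for this and is harmless on all counts because $1/\Phi_{z_0}(z_0)$ is a fixed interior point of $\D$; the only mild care needed is at the endpoints $A,B$ of $\Gamma$, where one checks that $1/\Phi_{z_0}$ still has well-defined one-sided boundary values landing in $\overline{\D}$ so that "boundary values everywhere on $\Gamma$" in (i) genuinely holds. The mutual absolute continuity of $\omega_{z_0}$ and arclength on $\Gamma$, needed to transfer the "$d\theta$-a.e." statements, is immediate from the established two-sided bounds on $\rho_{z_0}\sqrt{|\cdot-A||\cdot-B|}$.
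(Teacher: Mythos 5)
Your construction is, in substance, the paper's: push the singular part forward to $\partial\D$ via the (diffeomorphic away from the endpoints) boundary values of $\Phi_{z_0}$, apply Theorem \ref{thm2} to that singular measure, pass to $1-P_k$ to swap the roles of the two almost-everywhere statements, transplant back to $\Omega$ by composing with the conformal map, and normalize at $z_0$. The justification of (iii)--(iv) via conformal invariance of harmonic measure and of (v) via the $C^{1+}$ extension of $\Phi_{z_0}\circ\varphi$ also matches the paper. The one step that fails as written is the normalization. Your corrector $\widehat P_n(z)=P_n(z)+(1-P_n(a))\bigl(\tfrac{z-a}{1-\bar a z}\bigr)^{m_n}$ with $a=1/\Phi_{z_0}(z_0)$ does \emph{not} satisfy $\widehat P_n(a)=1$: the Blaschke factor vanishes at $a$, so $\widehat P_n(a)=P_n(a)$, and the same defect persists when you apply the device to $1-R_k$. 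The sentence ``the Blaschke-type term tends to $0$ on compacts of $\D$, hence $\widehat P_n(a)=1$'' is a non sequitur --- vanishing of the correction at $a$ is precisely what prevents it from changing the value there.

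The repair is easy, and the paper's choice is the cleanest: divide rather than add, setting (in your normalization) $Q_k(z)=\bigl(1-P_k(1/\Phi_{z_0}(z))\bigr)/\bigl(1-P_k(1/\Phi_{z_0}(z_0))\bigr)$. Since $1/\Phi_{z_0}(z_0)\in\D$ is fixed, Theorem \ref{thm2}(iii) gives $P_k(1/\Phi_{z_0}(z_0))\to 0$, so the denominator tends to $1$; this forces $Q_k(z_0)=1$ exactly while preserving uniform boundedness, the a.e.\ limits, and the $C^{1+}$ regularity in (v). (An additive correction can also be made to work --- e.g.\ simply adding the constant $P_k(a)$ to $1-P_k$ --- because the deficit $1-(1-P_k(a))=P_k(a)$ tends to $0$; but the Blaschke power is both unnecessary and, as written, ineffective.) Everything else in your argument --- the singularity of the pushed-forward measure, the treatment of the endpoints, and the mutual absolute continuity of $\omega_{z_0}$ and arclength used to transfer the ``$d\theta$-a.e.'' statements --- is sound and agrees with the paper, which phrases the transplantation equivalently through $\overline{P_k(1/\overline{\zeta})}$ on $\Cinf\setminus\overline{\D}$ instead of $P_k\circ(1/\Phi_{z_0})$ on $\D$.
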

\begin{proof}
    Let $\Gamma_0$ be $\Gamma$ without the endpoints. Then $\Phi_{z_0}^+$ and $\Phi_{z_0}^-$ are diffeomorphisms from $\Gamma_0$ to $\Phi_{z_0}^+(\Gamma_0)$ and $\Phi_{z_0}^-(\Gamma_0)$ respectively. Thus $\nu=(\Phi_{z_0}^+)^*\mu_s + (\Phi_{z_0}^-)^*\mu_s$ is singular on $\partial \D$. Let $ P_k$ be the polynomials associated with $\nu$ from Theorem \ref{thm2}. We define
    \begin{align*}
        R_k(\zeta) = (1-\overline{P_k(1/\overline{\zeta})})/(1-\overline{P_k(1/\Phi_{z_0}(z_0))}).
    \end{align*}
    Then $R_k\in \Hol(\Cinf\backslash \D)$ is uniformly bounded since $P_k(1/\Phi_{z_0}(z_0))\to 0$ and $|P_k|\leq 1$. We also know that $R_k(\Phi_{z_0}(z_0))=1$ and $R_k \in C^{2+}$ on $\partial \D$. Now we define $Q_k= R_k \circ\Phi_{z_0}$. Then properties $(i)$ and $(ii)$ are clear. To see $(iii)$ and $(iv)$ we note that $R_k(\zeta)\to 1$ $d\zeta$-a.e. on $\partial \D$ and $R_k(\zeta)\to 0$ $d\nu$-a.e. on $\partial \D$. Then 
    \begin{align*}
        \{z\in \Gamma: Q_k^+(z)\not\to 1\} = (\Phi_{z_0}^+)^{-1} (\{\zeta\in \partial \D: R_k(\zeta)\not\to 1\}).
    \end{align*}
    Since the set on the right has an arc-length measure of 0 on $\partial \D$ and the harmonic measure is conformal invariant, the set on the left has a harmonic measure of 0. We also have
    \begin{align*}
        \mu_s(\{z\in\Gamma: Q_k^+(z)\not\to 0 \})= \mu_s((\Phi_{z_0}^+)^{-1}\{\zeta\in\partial \D:R_k(\zeta)\not\to 0\})= (\Phi_{z_0}^+)^*\mu_s(\{\zeta\in\partial \D:R_k(\zeta)\not\to 0\})=0.
    \end{align*}
    The result for $Q_k^-$ follows similarly. For $(v)$ we consider $Q_k\circ \varphi= R_k \circ(\Phi_{z_0}\circ \varphi)$. $\Phi_{z_0}\circ \varphi$ is the conformal mapping from the unbounded component of $\Cinf\backslash \Gamma'$ to $\Cinf\backslash \D$ which is $C^{1+}$ because $\Gamma'$ is $C^{1+}$. Thus $Q_k\circ\varphi$ is the composition of two functions in $C^{1+}$ on $\Gamma'$ which shows $(v)$.
\end{proof}
\section{Proofs}
Now, we can prove our main theorems.
\begin{prf}{Theorem \ref{thm1}}
    The proof splits into 3 Steps.\\
        1. Step: Suppose ${f_{z_0}}\in C^{0+}$ and nonzero. Then according to Lemma \ref{lem1} (i) the function $F_{\mu,z_0}\circ \varphi$ is $C^{0+}$ on $\Gamma'$ and, in particular, in $H^\infty(\Omega)$. Let $Q_k$ be the sequence constructed in Corollary \ref{cor1}. Thus, $F_{\mu,z_0} \cdot Q_k\in H^\infty(\Omega)$ satisfies the conditions of Lemma \ref{lem2}. Then, the generalized Faber polynomial $T_{n,k}$ associated with the function $F_{\mu,z_0}\cdot Q_k$ have the property that 
        \begin{align*}
            T_{n,k}(z) - (F_{\mu,z_0}^+(z) Q_k^+(z)\Phi_{z_0}^+(z)^n + F_{\mu,z_0}^-(z) Q_k^-(z)\Phi_{z_0}^-(z)^n )\xrightarrow{n\to\infty} 0
        \end{align*}
        for every $k$ and uniform on $\Gamma$. They also satisfy $T_{n,k}(z_0)/ \Phi_{z_0}(z_0)^n \xrightarrow{n\to \infty} 1$ for $z_0\ne \infty$ and every $k$. If $z_0=\infty$, they have the leading coefficient $C(\Gamma,\infty)^n$. Thus 
        \begin{align*}
            W_{2,n}(\mu,z_0)^2 \leq \frac{\Phi_{z_0}(z_0)^{2n}}{|T_{n,k}(z_0)|^2} \int_\Gamma |T_{n,k}(z)|^2 d\mu(z)
        \end{align*}
        for $z_0\ne \infty$ and 
        \begin{align*}
            W_{2,n}(\mu,\infty)^2 \leq \int_\Gamma |T_{n,k}(z)|^2 d\mu(z)
        \end{align*}
        otherwise. Therefore, we have to evaluate this integral. Let 
        \begin{align*}
            H_n(z)&= \Phi_{z_0}^+(z)^n F_{\mu,z_0}^+(z) + \Phi_{z_0}^-(z)^n F_{\mu,z_0}^-(z), \\
            G_{n,k}(z)&= F_{\mu,z_0}^+(z) Q_k^+(z)\Phi_{z_0}^+(z)^n + F_{\mu,z_0}^-(z) Q_k^-(z)\Phi_{z_0}^-(z)^n,
        \end{align*}
        for $z\in \Gamma$. Then,
        \begin{align*}
            \int_\Gamma |T_{n,k}(z)|^2 d\mu(z) -\nu(\mu,z_0) &=
            \int_\Gamma \left(|T_{n,k}(z)|^2 -|G_{n,k}(z)|^2\right) d\mu(z) \\
            &+ \int_\Gamma |G_{n,k}(z)|^2 d\mu_s(z)\\
            &+ \int_\Gamma \left(|G_{n,k}(z)|^2- |H_n(z,z_0)|^2\right) {f_{z_0}}(z)d\omega_{z_0}(z)\\
            &+ \left(\int_\Gamma |H_n(z,z_0)|^2 {f_{z_0}}d\omega_{z_0}(z)-K_\mu(z_0,z_0)^{-1}\right)
        \end{align*}
        The first term vanishes for $n\to\infty$ for every $k$. Since $F_{\mu,z_0}(z)= K(z,z_0)/K(z_0,z_0)$ and $F_{\mu,z_0} \Phi_{z_0}^n$ are uniformly bounded on $\Gamma$ we know that the second and third term tend to zero for $k\to\infty$ uniformly in $n$ which follows from expanding the modulus and properties $(iii)$ and $(iv)$ in Corollary \ref{cor1}. For the fourth term, note
        \begin{align*}
            \int_\Gamma |H_n(z,z_0)|^2 {f_{z_0}}d\omega_{z_0}(z)-K_\mu(z_0,z_0)^{-1} &= \oint_\Gamma |F_{\mu,z_0}|^2 f_{z_0}d\omega_{z_0} \\
            &+ 2 \text{Re}\left(\int_\Gamma \Phi_{z_0}^+(z)^n F_{\mu,z_0}^+ \Phi_{z_0}^-(z)^{-n}\overline{F_{\mu,z_0}^-(z,z_0)}f_{z_0}(z)d\omega_{z_0}(z) \right) \\
            &- K_\mu(z_0,z_0)^{-1},
        \end{align*}
        where the first and third term cancel each other and the second approaches zero for $n\to\infty$ according to Lemma 12.1 in \cite{widom}. All in all, we get
        \begin{align*}
            \limsup_{n\to \infty }W_{2,n}(\mu,z_0)^2 \leq \nu(\mu,z_0).
        \end{align*}
        2. Step: We only assume that ${f_{z_0}}$ is bounded below. Let $\sigma = g d\omega_{z_0}+\mu_s$ where $g\in C^{0+}$ and nonzero. Let $T_{n,k}$, $G_{n,k}$ and $H_n$ be as before but associated with $\sigma$. Again we have
        \begin{align*}
                W_{2,n}(\mu,z_0)^2 \leq \begin{cases}
                    \frac{\Phi_{z_0}(z_0)^{2n}}{|T_{n,k}(z_0)|^2} \int_\Gamma |T_{n,k}|^2 d\mu &,z_0\ne \infty\\
                    \int_\Gamma |T_{n,k}|^2 d\mu &,z_0=\infty.
                \end{cases}
        \end{align*}
        Furthermore,
        \begin{align*}
            \int_\Gamma |T_{n,k}|^2 d\mu&= \int_\Gamma |T_{n,k}|^2 -|G_{n,k}|^2d\mu \\
            &+ \int_\Gamma |G_{n,k}|^2 d\mu_s\\
            &+ \int_\Gamma |G_{n,k}|^2- |H_n(\cdot,z_0)|^2 {f_{z_0}} \,d \omega_{z_0}\\
            &+ \int_\Gamma |H_n(\cdot,z_0)|^2({f_{z_0}}-g)d\omega_{z_0} \\
            &+ \int_\Gamma |H_n(\cdot,z_0)|^2g d\omega_{z_0}.
        \end{align*}
        The first term vanishes for every $k$ and $g$ if $n\to \infty$. The second and third term tend to zero for $k\to \infty$ uniform in $n$ and $g$ if $g^{-1}$ is bounded uniformly by Lemma \ref{lem1} (ii). The limit for $n\to\infty$ of the fifth term is $\nu(\sigma,z_0)$ for every $g$. For the fourth term we can construct a sequence $g_l$ such that $g_l^{-1}$ is bounded and
        \begin{align*}
            \int_\Gamma |{f_{z_0}}-g_l| d\omega_{z_0} \to 0.
        \end{align*}
        Then, the fourth term tends to zero uniformly in $n$ as $l\to\infty$. We also get
        \begin{align*}
            \int_\Gamma |\log {f_{z_0}} -\log g_l| d\omega_{z_0} \to 0.
        \end{align*}
        Using \eqref{szego_func} and \eqref{formula_nu}, this implies
        \begin{align*}
            \nu(\sigma_l,z_0) \to \nu(\mu,z_0).
        \end{align*}
        This shows
        \begin{align*}
            \limsup_{n\to\infty} W_{2,n}(\mu,z_0)^2 \leq \nu(\mu,z_0).
        \end{align*}
        3. Step: Now, we drop all assumptions. So let $\mu={f_{z_0}}\omega_{z_0}+\mu_s$ be any positive and finite measure supported on $\Gamma$. Then $f_\epsilon(z) = {f_{z_0}}(z)+\epsilon$ for $\epsilon>0$ is bounded from below and satisfies the Szeg\H{o}-condition. Let $\mu_\epsilon = f_\epsilon d\omega_{z_0}+\mu_s$. From the second step and Remark \ref{rem1}, we know
        \begin{align*}
            \limsup_{n\to \infty} W_{2,n}(\mu,z_0)^2 \leq \limsup_{n\to \infty} W_{2,n}(\mu_\epsilon,z_0)^2\leq \frac{R_{f_\epsilon}(z_0)}{K_{\omega_{z_0}}(z_0,z_0)}. 
        \end{align*}
        Since
        \begin{align*}
            R_{f_\epsilon}(z_0) = \exp\left(\int_\Gamma \log({f_{z_0}}+\epsilon)d\omega_{z_0}\right) \xrightarrow{\epsilon\to 0 } R_{f_{z_0}}(z_0),
        \end{align*}
        because of monotonous convergence, we know
        \begin{align*}
            \limsup_{n\to \infty} W_{2,n}(\mu,z_0)^2 \leq \frac{R_{f_{z_0}}(z_0)}{K_{\omega_{z_0}}(z_0,z_0)}.
        \end{align*}
        Combined with Proposition \ref{prop1} (if ${f_{z_0}}$ satisfies the Szeg\H{o}-condition) and Remark \ref{rem1} we get
        \begin{align*}
            \lim_{n\to\infty}W_{2,n}(\mu,z_0)^2 = \frac{R_{f_{z_0}}(z_0)}{K_{\omega_{z_0}}(z_0,z_0)}= (1-\Phi_{z_0}(z_0)^{-2}) \frac{2\pi}{|\Phi_{z_0}'(z_0)|}R_\mu(z_0).
        \end{align*}
\end{prf}
\begin{prf}{Theorem \ref{thm3}}
        We want to show the strong asymptotics provided that $\mu$ satisfies the Szeg\H{o}-condition. In proving Theorem \ref{thm1} and with Remark \ref{rem1} we have already shown the first assertion
        \begin{align*}
             \lim_{n\to\infty}|C(\Gamma,z_0)|^{-2n}\lambda_n(\mu,z_0) = \nu(\mu,z_0).
        \end{align*}
        It remains to prove the asymptotics of the minimizing polynomials. Using the reproducing kernel property of $K_\mu(\cdot,\cdot)$ and $H_n(z) = (\Phi_{z_0}^+(z)^n K_{\mu}^+(z,z_0) + \Phi_{z_0}^-(z_0)^n K_{\mu}^-(z,z_0)) /K_\mu(z_0,z_0)$ we get 
        \begin{align*}
            \int_\Gamma |C(\Gamma,z_0)^{-n}P_{n,\mu,z_0} - H_n|^2 {f_{z_0}}d\omega_{z_0} &\leq W_{2,n}(\mu,z_0) + \int_\Gamma |H_n|^2 {f_{z_0}}d\omega_{z_0} \\
						&-2 \text{Re}\left(\int_\Gamma C(\Gamma,z_0)^{-n}P_{n,\mu,z_0} \overline{H_n}{f_{z_0}}d\omega_{z_0}\right)\\
            &= W_{2,n}(\mu,z_0) + \int_\Gamma |H_n|^2 {f_{z_0}}d\omega_{z_0} \\
            &-2\nu(\mu,z_0)\text{Re}\left(\oint_\Gamma C(\Gamma,z_0)^{-n}P_{n,\mu,z_0}(z) \Phi_{z_0}(z)^{-n} \overline{K_\mu(z,z_0)}{f_{z_0}}d\omega_{z_0}\right)\\
            &= \nu(\mu,z_0) + \nu(\mu,z_0) -2\nu(\mu,z_0) + o(1),
        \end{align*}
        as $n\to\infty$, which proves the second assertion. Furthermore, 
        \begin{align*}
            \frac{C(\Gamma,z_0)^{-n}P_{n,\mu,z_0}(w)}{\Phi_{z_0}(w)^n}&= \oint_\Gamma \frac{C(\Gamma,z_0)^{-n}P_{n,\mu,z_0}(z)}{\Phi_{z_0}(z)^n}\overline{K_\mu(z,w)}{f_{z_0}}(z)d\omega_{z_0}(z) \\
            &=\oint_\Gamma \left(\frac{C(\Gamma,z_0)^{-n}P_{n,\mu,z_0}(z)}{\Phi_{z_0}(z)^n}-\frac{H_n(z)}{\Phi_{z_0}(z)^n}\right)\overline{K_\mu(z,w)}{f_{z_0}}(z)d\omega_{z_0}(z)\\
            &+\oint_\Gamma \frac{H_n(z)}{\Phi_{z_0}(z)^n}\overline{K_\mu(z,w)}{f_{z_0}}(z)d\omega_{z_0}(z).
        \end{align*}
        Based on what we have shown before, the first term approaches zero for $n\to\infty$. The convergence is even uniformly for $w$ in closed subsets of $\Omega$ since $K_\mu(w,w)$ is continuous in $\Omega$. The second evaluates to
        \begin{align*}
            \oint_\Gamma \frac{H_n(z)}{\Phi_{z_0}(z)^n}\overline{K_\mu(z,w)}{f_{z_0}}(z)d\omega_{z_0}(z) &= \oint_\Gamma F_{\mu,z_0}(z)\overline{K_\mu(z,w)} {f_{z_0}}(z)d\omega_{z_0}(z) \\
            &+\int_\Gamma \Phi_{z_0}^-(z)^n F_{\mu,z_0}^-(z)\Phi_{z_0}^+(z)^{-n}\overline{K_\mu^+(z,w)}{f_{z_0}}(z)d\omega_{z_0}(z)\\
            &+\int_\Gamma \Phi_{z_0}^+(z)^n F_{\mu,z_0}^+(z)\Phi_{z_0}^-(z)^{-n}\overline{K_\mu^-(z,w)}{f_{z_0}}(z)d\omega_{z_0}(z).
        \end{align*}
        Here, the first term is $F_{\mu,z_0}(w)$. Lemma 12.1. in \cite{widom} shows that the second and third term vanish uniformly for $w$ in closed subsets of $\Omega$. 
\end{prf}
\begin{prf}{Corollary \ref{cor2}}
    Theorem \ref{thm1} and Remark \ref{rem1} show
    \begin{align*}
        \lim_{n\to\infty} W_{2,n}(\mu,z_0)^2=\frac{R_{f_{z_0}}(z_0)}{K_{\omega_{z_0}}(z_0,z_0)} = \frac{1}{K_{\omega_{z_0}}(z_0,z_0)} \exp\left(\int_\Gamma \log f_{z_0}d\omega_{z_0}\right).
    \end{align*}
    We see that, in order to maximize $\lim_{n\to\infty}W_{2,n}(\mu,z_0)$ over probability measures we have to maximize $R_{f_{z_0}}(z_0)$. Thus we may assume that $\mu= {f_{z_0}}d\omega_{z_0} + \mu_s$ satisfies the Szeg\H{o} condition. Then 
    \begin{align*}
        R_{f_{z_0}}(z_0) = \exp\left(\int_\Gamma \log {f_{z_0}} d\omega_{z_0} \right)\leq \exp\left(\log(\int_\Gamma {f_{z_0}}d\omega_{z_0}) \right)=\mu(\Gamma)-\mu_s(\Gamma)= 1-\mu_s(\Gamma).
    \end{align*}
    The first inequality follows from the Jensen inequality with equality if and only if ${f_{z_0}}$ is constant. Thus, the unique maximizer is ${f_{z_0}}\equiv 1$, corresponding to the harmonic measure $\omega_{z_0}$.
\end{prf}


\end{document}